\documentclass[11pt]{amsart}

\usepackage{amsmath}
\usepackage{amssymb}
\usepackage{amsthm}
\usepackage{mathrsfs}
\usepackage{enumitem}
\usepackage[all]{xypic}
\usepackage[bookmarks=true,bookmarksnumbered=true]{hyperref}
\usepackage[hmargin=2cm,vmargin=3cm]{geometry}

\setlength{\parskip}{5pt}

\numberwithin{equation}{section}

\theoremstyle{plain}
\newtheorem{thm}{Theorem}[section]

\newtheorem{prop}[thm]{Proposition}

\newtheorem*{main}{Main Theorem}
\newtheorem*{B}{Theorem (B)$_n$}

\theoremstyle{definition}

\theoremstyle{remark}
\newtheorem{rem}[thm]{Remark}

\def\disc{\operatorname{disc}}

\def\Hom{\operatorname{Hom}}

\def\Irr{\operatorname{Irr}}

\def\N{\operatorname{N}}

\def\Tr{\operatorname{Tr}}

\def\GL{\mathrm{GL}}

\def\SL{\mathrm{SL}}

\def\U{\mathrm{U}}

\def\WD{\mathit{WD}}

\def\CC{\mathbb{C}}

\def\ZZ{\mathbb{Z}}

\def\1{\Eins}

\makeatletter
\def\varddots{\mathinner{\mkern1mu
    \raise\p@\hbox{.}\mkern2mu\raise4\p@\hbox{.}\mkern2mu
    \raise7\p@\vbox{\kern7\p@\hbox{.}}\mkern1mu}}
\makeatother

\newcommand{\BIGOP}[1]{\mathop{\mathchoice%
{\raise-0.22em\hbox{\huge $#1$}}%
{\raise-0.05em\hbox{\Large $#1$}}{\hbox{\large $#1$}}{#1}}}

\newcommand{\BIGboxplus}{\mathop{\mathchoice%
{\raise-0.35em\hbox{\huge $\boxplus$}}%
{\raise-0.15em\hbox{\Large $\boxplus$}}{\hbox{\large $\boxplus$}}{\boxplus}}}

\title{On the Fourier-Jacobi model for some endoscopic Arthur packet of $U(3) \times U(3)$ : the non-generic case}
\author{Jaeho Haan}
\address{Algebraic Structure and its Applications Research Center(ASARC), Department
of Mathematics, Korea Advanced Institute of Science and Technology}
\email{jaehohaan@gmail.com}

\keywords{Gross-Prasad conejcture, Fourier-Jacobi case, non-tempered Arthur packet, unitary groups, local theta correspondence, $\epsilon$-factor}
\date{\today}

\begin{document}

\begin{abstract}For a generic $L$-parameter of $U(n)\times U(n)$, it is conjectured that there is a unique representation in their associated relevant Vogan $L$-packet which produces the unique Fourier-Jacobi model. We investigated this conjecture for some non-generic $L$-parameters of $U(3)\times U(3)$ and discovered that this conjecture is still true for some non-generic $L$-parameter and false for some non-generic $L$-parameter. In the case when it holds, we specified such representation under the local Langlands correspondence for unitary group.\\

\end{abstract}
\maketitle
\section{\textbf{Introduction}}The local Gan-Gross-Prasad conjecture deals with certain restriction problems between $p$-adic groups. In this paper, we shall investigate it for some non-generic case not yet treated before. 

Let $E/F$ be a quadratic extension of number fields and $G=U(3)$ be the quasi-split unitary group of rank 3 relative to  $E/F$. Then $H=U(2) \times U(1)$ is the unique elliptic endoscopic group for $G$. 
In \cite{Ro}, Rogawski has defined a certain enlarged class of $L$-packets, or $A$-packets, of $G$ using endoscopic transfer of one-dimensional characters of $H$ to $G$. In more detail, let $\varrho= \otimes_{v} \varrho_v$ be a one-dimensional automorphic character of $H$. The $A$-packet $\Pi(\varrho) \simeq \otimes \Pi(\varrho_v)$ is the transfer of $\varrho$ with respect to functoriality for an embedding of $L$-groups $\xi : ^LH \to ^LG$. Then for all places $v$ of $F$, $\Pi(\varrho_v)$ contains a certain non-tempered representation $\pi^{n}(\varrho_v)$ and it contains an additional supercuspidal representaton $\pi^{s}(\varrho_v)$ precisely when $v$ remains prime in $E$. Gelbart and Rogawski \cite{Ge1} showed that the representations in this $A$-packet arise in the Weil representation of $G$. Our goal is to study the branching rule of the representations in this $A$-packet.

For the branching problem, there is a fascinating conjecture, the so-called Gan-Gross-Prasad (GGP) conjecture, which was first proposed by Gross and Prasad \cite{ggp} for orthogonal group and later they, together with Gan, extended it to all classical group in \cite{Gan2}. Since our main theorem has to do with it, we shall give a brief review on the GGP conjecture, especially for unitary group.

Let $E/F$ be a quadratic extension of non-archimedean local fields of characteristic zero. Let $V_{n+1}$ be a Hermitian space of dimension $n+1$ over $E$ and $W_n$ a skew-Hermitian space of dimension $n$ over $E$.
Let $V_n \subset V_{n+1}$  be a nondegenerate subspace of codimension $1$,  so that if we set 
\[  G_n =  \U(V_n) \times \U(V_{n+1}) \quad \text{or} \quad \U(W_n) \times \U(W_n) \]
and
\[   H_n = \U(V_n) \quad \text{or} \quad \U(W_n), \]
then we have a diagonal embedding
\[ \Delta:  H_n \hookrightarrow G_n. \]

Let $\pi$ be an irreducible smooth representation of $G_n$. In the Hermitian case, one is interested in computing 
\[  \dim_\CC \Hom_{\Delta H_n} ( \pi, \CC). \]
We shall call this the \emph{Bessel} case (B) of the GGP conjecture.
For the GGP conjecture in the skew-Hermitian case, we need to introduce a certain Weil representation $\omega_{\psi, \chi, W_n}$ of $H_n$, where $\psi$ is a nontrivial additive character of $F$ and $\chi$ is a character of $E^{\times}$ whose restriction to $F^{\times}$ is the quadratic character $\omega_{E/F}$ associated to $E/F$ by local class field theory. (For the exact definition of $\omega_{\psi, \chi, W_n}$, please refer to Section. \ref{SS}.) In this case, one is interested in computing
\[  \dim_\CC \Hom_{\Delta H_n} ( \pi, \omega_{\psi,\chi, W_n}). \]
We shall call this the \emph{Fourier--Jacobi} case (FJ) of the GGP conjecture. To treat both cases using one notation, we shall let $\nu = \CC$ or $\omega_{\psi,\chi, W_n}$ in the respective cases.

By the results of \cite{agrs} \cite{sun}, it is known that \[  \dim_\CC \Hom_{\Delta H_n} ( \pi, \nu ) \le 1 \] and so the next step is to specify irreducible smooth representations $\pi$ such that $$\Hom_{\Delta H_n} ( \pi, \nu ) = 1. $$(A non-zero element of $\Hom_{\Delta H_n} ( \pi, \nu )$ is called a \emph{Bessel} (\emph{Fourier-Jacobi}) model of $\pi$ in the (skew) hermitian case.)

In \cite{Gan2}, Gan, Gross, Prasad has brought this problem into a more general setting using the notion of relevant pure inner forms of $G_n$ and Vogan $L$-packets. A pure inner form of $G_n$ is a group of the form 
\[  G_n' = \U(V_n') \times \U(V'_{n+1}) \quad \text{or} \quad \U(W'_n) \times \U(W''_n) \]
where $V_{n}' \subset V_{n+1}'$ are $n$ and $n+1$ dimensional hermitian spaces over $E$ and $W_n', W_n''$ are $n$-dimensional skew hermitian spaces over $E$.

\noindent Furthermore, if \[  \quad  V_{n+1}'/V_{n}' \cong V_{n+1}/V_{n} \quad \text{or} \quad W_n'=W_n'', \] we say that $G_n'$ is relevant pure inner form.\\ (Indeed, there are four pure inner forms of $G_n$ and among them, only two are relevant.)

If $G_n'$ is relevant, we set
\[   H'_n = \U(V'_n) \quad \text{or} \quad \U(W'_n), \]
so that we have a diagonal embedding
\[ \Delta:  H'_n \hookrightarrow G'_n. \]

Now suppose that $\phi$ is an $L$-parameter for the group $G_n$. Then the (relevant) Vogan $L$-packet $\Pi_{\phi}$ associated to $\phi$  consists of certain irreducible smooth representations of $G_n$ and its (relevant) pure inner forms $G_n'$ whose $L$-parameter is $\phi$. We denote the relevant Vogan $L$-packet of $\phi$ by $\Pi^R_{\phi}$.

With these notions, we can loosely state the result of Beuzart-Plessis (\cite{bp1},\cite{bp2},\cite{bp3}) for \emph{Bessel} case and Gan-Ichino (\cite{iw}) for \emph{Fourier-Jacobi} case as follows :
\begin{thm}
For a tempered $L$-parameter $\phi$ of $G_n$, the followings hold:
\begin{enumerate}
\item $\sum_{\pi' \in \Pi^R_{\phi}}\dim_{\CC}\Hom_{\Delta H'_n} ( \pi', \nu )=1.$

\item Using the local Langlands correspondence for unitary group, we can pinpoint the unique $\pi' \in \Pi^R_{\phi}$ such that $$\dim_{\CC}\Hom_{\Delta H'_n} ( \pi', \nu )=1.$$ 
\end{enumerate}
\end{thm}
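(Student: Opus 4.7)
The plan is to reduce both assertions to an explicit multiplicity formula of Ichino-Ikeda type, expressing $m(\pi') := \dim_{\CC}\Hom_{\Delta H_n'}(\pi',\nu)$ as an integral of the Harish-Chandra character $\Theta_{\pi'}$ against an explicit weighting function on the elliptic semisimple locus of $H_n'$. Summing such a formula over the relevant Vogan $L$-packet $\Pi^R_\phi$ should convert $\sum_{\pi'} m(\pi')$ into an integral of the stable character $S\Theta_\phi$; the endoscopic character identities attached to the local Langlands correspondence for unitary groups then let me identify this integral with a sum of local root numbers, which by the Gross-Prasad $\epsilon$-factor criterion equals $1$ and picks out the unique member of $\Pi^R_\phi$ carrying a nonzero model. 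This would simultaneously yield part $(i)$ and the identification in part $(ii)$.

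For the Bessel case, I would follow a Beuzart-Plessis type local relative trace formula on $H_n' \backslash G_n' \times H_n' \backslash G_n'$. First I would attach to each matrix coefficient $f$ of $\pi'$ a truncated test kernel $K_f$ and develop two expansions: a spectral side that manifestly involves $m(\pi')$, and a geometric side that, after careful analysis on the orbit space, localizes onto the elliptic regular set. The delicate analytic input is the control of weighted orbital integrals near the parabolic boundary, together with a limiting argument expressing $m(\pi')$ as an elliptic character integral. Once the resulting multiplicity formula is in place, the comparison with the $\epsilon$-factor criterion becomes a pure identity between stable and endoscopic distributions, which one can verify using the endoscopic character relations inherent in the local Langlands correspondence (Mok, Kaletha-Minguez-Shin-White).

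For the Fourier-Jacobi case, I would bootstrap from the Bessel case via local theta correspondence, following the strategy of Gan-Ichino. The key tool is a see-saw diagram relating the dual pair attached to $\omega_{\psi,\chi,W_n}$ on $\U(W_n)\times\U(W_n)$ to a Hermitian dual pair $(\U(V_{n+1}),\U(V_n))$; a Fourier-Jacobi functional for $\pi_1 \otimes \pi_2$ should produce, through a Rallis-type inner product identity, a Bessel functional for $\theta(\pi_1)\otimes \pi_2$, and the conservation relation for theta lifts guarantees that this transfer is a bijection at the level of relevant Vogan $L$-packets. Since the local Langlands correspondence is compatible with theta lifting for unitary groups, and since the $\epsilon$-factor criterion is preserved under the Prasad-type formula for the parameter of $\theta(\pi_1)$, both statements $(i)$ and $(ii)$ in the FJ case follow from the already established Bessel case.

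The hardest part will be the geometric multiplicity formula itself in the Bessel case: controlling the geometric side of the local relative trace formula requires a detailed analysis of weighted orbital integrals and of boundary contributions from proper parabolic subgroups, and this analysis is presently available only in the tempered regime, which is precisely why the theorem must be restricted to tempered $\phi$. A secondary non-trivial issue is the uniform handling of pure inner forms: I must verify that the summation over $\Pi^R_\phi$ aggregates contributions from all relevant inner forms of $G_n$ in a coherent way, so that the character of the component group $S_\phi$ determined by local root numbers really does single out a unique distinguished representation.
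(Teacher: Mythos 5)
The paper does not prove this theorem; it states it as a known result, with the Bessel case credited to Beuzart-Plessis (\cite{bp1}, \cite{bp2}, \cite{bp3}) and the Fourier--Jacobi case to Gan--Ichino (\cite{iw}). Your outline --- the local relative trace formula producing an elliptic character integral formula for the Bessel multiplicity, the endoscopic character identities converting the packet sum into a stable distribution identified with an $\epsilon$-factor, and the see-saw theta-lift transfer reducing the Fourier--Jacobi case to the Bessel case --- is a faithful high-level summary of the strategy in those references, and hence is consistent with what this paper relies upon. Be aware, however, that it remains a roadmap rather than a proof: the geometric expansion of the local relative trace formula, the boundary estimates you correctly flag as delicate, and the precise compatibility of theta lifting with the local Langlands correspondence (Prasad's conjectures, theta dichotomy for tempered parameters) are exactly the substantial content of those cited works, none of which you supply, and the present paper simply quotes the theorem as background without reproving it.
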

To emphasize its dependence on the number $n$, we denote the $Bessel$ and $\emph{Fourier-Jacobi}$ case of Theorem 1.1 as (B)$_n$ and (FJ)$_n$ respectively, and later we shall elaborate more on this notation. \\The GGP conjecture predicts that this theorem also holds for a generic $L$-parameter $\phi$ of $G_n$.

Our main theorem is to investigate (FJ)$_3$ for some $L$-parameter of $G_3$ involving a non-generic $L$-parameter of $U(W_3)$. More precisely, we have

\begin{main}For an irreducible smooth representation $\pi_2$ of $U(W_3)$, let $\pi=\pi^{n}(\varrho) \otimes \pi_2$ as a representation of $G_3$. Then

\begin{enumerate}
\item $\Hom_{\Delta H_3}(\pi,\omega_{\psi,\chi,W_3})=0$ if $\pi_2$ is not a theta lift from $U(V_2)$.
\item Assume that $\pi_2$ is the theta lift from $U(V_1')$ and let $\phi=\phi^{n}\otimes \phi_2$ be the $L$-parameter of $\pi$. Then $$\sum_{\pi' \in \Pi^R_{\phi}}\dim_{\CC}\Hom_{\Delta H'_3} ( \pi', \omega_{\psi,\chi,W_3} )=1.$$ 
\item Using the local Langlands correspondence for unitary groups, we can explicitly describe the representation $\pi' \in \Pi^R_{\phi}$ appearing in \text{(ii)} such that $$\label{p}\dim_{\CC}\Hom_{\Delta H'_3} ( \pi', \omega_{\psi,\chi,W_3} )=1.$$
\end{enumerate}

\end{main}

\begin{rem} As we shall see in Theorem \ref{Te}, the $L$-parameter of $\pi^{n}(\varrho_v)$ is not only non-tempered but also non-generic. Thus if we choose the $L$-parameter of $\phi_2$ in $\phi$ apart from those obtained by the theta lift from $U(V_2)$ to $U(W_3)$, then the first part of the Main Theorem tells us that the GGP conjecture may not  be true for non-generic $L$-parameter of $G_n$.

\end{rem}The proof of Main Theorem is based on the following see-saw diagram :
\[
 \xymatrix{
  \U(W_3)  \times \U(W_3)  \ar@{-}[dr] \ar@{-}[d] & \U(V_{2}) \ar@{-}[d] \\
  \U(W_3) \ar@{-}[ur] &  \U(V_1) \times \U(V_{1})}.
\]

Since all elements in the $A$-packet $\Pi(\varrho)$ can be obtained by theta lift from $U(V_1)$, we can write $\pi^n(\varrho)=\Theta_{\psi,\chi,W_3,V_1}(\sigma)$ where $\sigma$ is an irreducible smooth character of $U(V_1)$ and $\psi,\chi$ are some characters, which are needed to fix a relevant Weil representation. Then by the see-saw identity, we have 
$$\Hom_{U(W_3)}(\Theta_{\psi,\chi,W_3,V_1}(\sigma)\otimes  \omega^{\vee}_{\psi,\chi,W_3},\pi^{\vee}_2) \simeq \Hom_{U(V_1)}(\Theta_{\psi,\chi,V_2,W_3}(\pi^{\vee}_2),\sigma).$$ From this, we see that for having $\Hom_{U(W_3)}(\Theta_{\psi,\chi,W_3,V_1}(\sigma)\otimes  \omega^{\vee}_{\psi,\chi,W_3},\pi^{\vee}_2) \ne0$, it should be preceded $\Theta_{\psi,\chi,V_2,W_3}(\pi^{\vee}_2)\ne 0$. This accounts for (i) in the \textbf{Main Theorem} because $$\Hom_{U(W_3)}(\Theta_{\psi,\chi,W_3,V_1}(\sigma)\otimes  \omega^{\vee}_{\psi,\chi,W_3},\pi^{\vee}_2)\simeq \Hom_{U(W_3)}(\Theta_{\psi,\chi,W_3,V_1}(\sigma)\otimes \pi_2, \omega_{\psi,\chi,W_3}).$$

If $\Theta_{\psi,\chi,V_2,W_3}(\pi^{\vee}_2)\ne 0$, then by the local theta correspondence, $\pi^{\vee}_2$ should be $\Theta_{\psi,\chi,W_3,V_2}(\pi_0)$, where $\pi_0$ is an irreducible representation of $U(V_2)$. By applying (B)$_1$, we can pinpoint $\pi_0$ and $\sigma$ in the framework of local Langlands correspondence such that $\Hom_{U(V_1)}(\pi_0,\sigma)\ne0$. Next we shall use the precise local theta correspondences for $\big(U(V_1),U(W_3)\big)$ and $\big(U(V_1),U(W_1)\big)$ in order to transfer recipe for (B)$_1$ to (FJ)$_3$. 

The rest of the paper is organized as follows; In Section 2, we shall give a brief sketch of the local Langlands correspondence for unitary group. In Section 3, we collect some results on the local theta correspondence for unitary group which we will use in the proof of our main results. In Section 4, we shall prove our Main Theorem.

\subsection*{Acknowledgements} The author would like to express deep gratitude to professor Haseo Ki for introducing him this research area. The influence of the work of Gan and Ichino \cite{iw} on this paper is obvious to the reader. We are grateful to professor Atsushi Ichino for his kind suggestion to pursue the non-tempered aspects of the GGP conjecture. The author expresses his deepest thanks to the referee for his many invaluable comments and bringing our attention to the work of H.Atobe and Gan \cite{ag} containing Theorem 4.1 and Proposition 5.4, both of which are crucial to prove our main theorem. This work was supported by the National Research Foundation of Korea(NRF) grant funded by the Korea government(MSIP)(ASARC, NRF-2007-0056093).

\subsection{Notations}\label{not}We fix some notations we shall use throughout this paper:
\begin{itemize}
\item  $E/F$ is a quadratic extension of non-archimedean local fields of characteristic zero.

\item  $c$ is the non-trivial element of Gal$(E/F)$.

\item $\text{Fr}_E$ is a Frobenius element of Gal$(\bar{E}/E)$.

\item  Denote by $\Tr_{E/F}$ and $\N_{E/F}$ the trace and norm maps from $E$ to $F$.

\item $\delta$ is an element of $E$ such that  $\Tr_{E/F}(\delta)=0$.

\item Let $\psi$ be an additive character of $F$ and define $$\psi^E(x)  := \psi(\frac{1}{2}\Tr_{E/F}(\delta x)) \quad \text{and} \quad \psi^E_2(x)  := \psi(\Tr_{E/F}(\delta x)).$$

\item  Let $\chi$ be a character of $E^{\times}$ whose resriction to $F^{\times}$ is $\omega_{E/F}$, which is the quadratic character assosiated to $E/F$ by local class field theory. 

\item For a linear algebraic group $G$, its $F$-points will be denoted by $G(F)$ or simply by $G$.

\end{itemize}

\section{\textbf{Local Langlands correspondence for unitary group}}
By the recent work of Mok \cite{Mok} and Kaletha-M\'inguez-Shin-White \cite{kmsw}, the local Langlands correpondence is now known for unitary group conditional on the stabilization of the twisted trace formula and weighted fundamental lemma. The twisted trace formula has now been stabilized by Waldspurger \cite{Stab1}--\cite{Stab9} and Moeglin-Waldspurger \cite{Stab6}--\cite{Stab10} and the proof of the weighted fundamental lemma is an ongoing project of Chaudouard and Laumon.   Since our main results are expressed using the local Langlands correspondence, we shall assume the local Langlands correspondence for unitary group. In this section, we list some of its properties which are used in this paper. Indeed, much of this section are excerpts from Section.2 in \cite{iw}.

\subsection{Hermitian and skew-Hermitian spaces}
For $\varepsilon=\pm1$, let $V$ be a finite $n$-dimensional vector space over $E$ equipped with a nondegenerate $\varepsilon$-hermitian $c$-sesquilinear form $\langle \cdot, \cdot \rangle_V : V \times V \rightarrow E$.
That means for $v, w \in V$ and $a, b \in E$,  \[
 \langle a v, b w \rangle_V = a b^c \langle v, w \rangle_V, \qquad
 \langle w, v \rangle_V = \varepsilon \cdot \langle v, w \rangle_V^c.
\]

\noindent We define $\disc V = (-1)^{(n-1)n/2} \cdot \det V$, so that 
\[
 \disc V \in
 \begin{cases}
  F^{\times} / \mathrm{N}_{E/F}(E^{\times})
  & \text{if $\varepsilon = +1$;} \\
  \delta^n \cdot F^{\times} / \mathrm{N}_{E/F}(E^{\times})
  & \text{if $\varepsilon = -1$}
 \end{cases}
\]
and we can define $\epsilon(V) = \pm 1$ by 
\begin{equation}\label{sign}
 \epsilon(V) = 
 \begin{cases}
  \omega_{E/F}(\disc V) & \text{if $\varepsilon = +1$;} \\
  \omega_{E/F}(\delta^{-n} \cdot \disc V) & \text{if $\varepsilon = -1$.}
 \end{cases}
\end{equation}
By a theorem of Landherr, for a given positive integer $n$, there are exactly two isomorphism classes of $\varepsilon$-hermitian spaces of dimension $n$ and they are distinguished from each other by $\epsilon(V)$.\\
Let $\U(V)$ be the unitary group of $V$ defined by
\[
  \U(V) = \{ g \in \GL(V) \, | \,
 \text{$\langle g v, g w \rangle_V =  \langle v, w \rangle_V$ for $v, w \in V$}
 \}.
\]
Then $U(V)$ turns out to be connected reductive algebraic group defined over $F$.
\subsection{$L$-parameters and component groups}

Let $I_F$ be the inertia subgroup of $\text{Gal}(\bar{F}/F)$. Let $W_F=I_F \rtimes \langle \text{Fr}_F \rangle $ be the Weil group of $F$ and $\WD_F = W_F \times \SL_2(\CC)$ the Weil-Deligne group of $F$.
For a homomorphism $\phi: \WD_F \rightarrow \GL_n(\CC)$, we say that it is a representation of $\WD_F$ if 
\begin{enumerate}
\item $\phi$ is continuous and $\phi(\text{Fr}_F)$ is semisimple, 
 \item the restriction of $\phi$ to $\SL_2(\CC)$ is induced by a morphism of algebraic groups $\SL_2 \to \GL_n$
\end{enumerate}
If moreover the image of $W_F$ is bounded then we say that $\phi$ is tempered. Define $\phi^{\vee}$ by $\phi^\vee(w) = {}^t\phi(w)^{-1}$ and call this the contragredient representation of $\phi$. If $E/F$ is a quadratic extension of local fields and $\phi$ is a representation of $\WD_E$, fix $s \in W_F \smallsetminus W_E$ and define a representation $\phi^c$ of $\WD_E$ by $\phi^c(w) = \phi(sws^{-1})$.
The equivalence class of $\phi^c$ is independent of the choice of $s$. We say that $\phi$ is conjugate self-dual if there is an isomorphism $b: \phi \mapsto (\phi^{\vee})^{c}$. Note that there is a natural isomorphism $(((\phi^{\vee})^c)^{\vee})^c\simeq \phi$ so that $(b^{\vee})^c$ can be considered as an isomorphism from $\phi$ onto $(\phi^{\vee})^c$. For $\varepsilon=\pm 1$, we say that $\phi$ is conjugate self-dual with sign $\varepsilon$ if there exists such a $b$ satisfying the extra condition $(b^{\vee})^c=\varepsilon \cdot b$.
Define $\text{As}(\phi):WD_F \to \GL_{n^2}(\mathbb{C})$ by tensor induction of $\phi$ as follows;
$$\text{As}(\phi)(w)=\begin{cases}\phi(w)\otimes \phi(s^{-1}ws) \quad \quad  \quad \quad\text{ if } w\in WD_E \\ \iota \circ (\phi(s^{-1}w) \otimes \phi(ws)) \quad \quad \text{ if } w \in WD_F \smallsetminus WD_E\end{cases}$$ where $\iota$ is the linear isomorphism of $\mathbb{C}^n\otimes \mathbb{C}^n$ given by $\iota(x \otimes y )=y \otimes x$.

Then the equivalence class of $\text{As}(\phi)$ is also independent of the choice of $s$. We set $\text{As}^+(\phi)=\text{As}(\phi)$ and $\text{As}^{-}(\phi)=\text{As}(\chi  \otimes \phi)$. 

Let $V$ be a $n$-dimensional $\varepsilon$-hermitian space over $E$. An $L$-parameter for the unitary group $\U(V)$ is a $GL_n(\mathbb{C})$-conjugacy class of admissible homomorphisms $$\phi:\WD_F \longrightarrow ^LU(V)=GL_{n}(\CC) \rtimes \text{Gal}(E/F)$$ such that  the composite of $\phi$ with the projection onto Gal$(E/F)$ is the natural projection of $\WD_F$ to Gal$(E/F)$.

The following proposition from \cite[\S 8]{Gan2} enables us to removes the cumbersome Gal$(E/F)$-factor in the definition of $L$-parameter of $U(V)$.

\begin{prop}Restriction to $\WD_E$ gives a bijection between the set of $L$-parameters for $U(V)$ and the set of equivalence classes of conjugate self-dual representations $$\phi:\WD_E \longrightarrow GL_n(\CC)$$ 
of sign $(-1)^{n-1}$.
\end{prop}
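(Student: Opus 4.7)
My plan is to decompose $W_F = W_E \sqcup s W_E$ for a fixed $s \in W_F \smallsetminus W_E$ and use this to describe an $L$-parameter $\phi : \WD_F \to {}^L U(V)$ by the pair $(\rho, A)$, where $\rho := \phi|_{\WD_E}$ takes values in $\GL_n(\CC)$ and $\phi(s) = A \rtimes c$. Recall that the action of $c$ on $\GL_n(\CC)$ inside ${}^L U(V)$ is the pinned outer automorphism $g \mapsto \Phi_n \cdot {}^t g^{-1} \cdot \Phi_n^{-1}$, where $\Phi_n$ is the anti-diagonal matrix with $(-1)^{i-1}$ in position $(i, n+1-i)$; a direct computation gives $\Phi_n \cdot {}^t \Phi_n^{-1} = (-1)^{n-1} I_n$. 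Given $(\rho, A)$, the homomorphism $\phi$ is reconstructed by $\phi(sw) := \phi(s) \phi(w)$ for $w \in \WD_E$, so the content lies in identifying the constraints on $(\rho, A)$.

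First I would unpack the two relations forced by the homomorphism property. The identity $\phi(sws^{-1}) = \phi(s) \phi(w) \phi(s)^{-1}$ for $w \in \WD_E$ becomes $\rho(sws^{-1}) = (A \Phi_n) \cdot {}^t \rho(w)^{-1} \cdot (A \Phi_n)^{-1}$, which is precisely the assertion that $b := A \Phi_n$ intertwines $\rho^\vee$ with $\rho^c$; in particular $\rho$ is conjugate self-dual. Second, $\phi(s^2) = \phi(s)^2$ together with $s^2 \in W_E$ unwinds to $\rho(s^2) = A \cdot \Phi_n \cdot {}^t A^{-1} \cdot \Phi_n^{-1}$; substituting $A = b \Phi_n^{-1}$ and applying $\Phi_n \cdot {}^t \Phi_n^{-1} = (-1)^{n-1} I_n$, this identity is equivalent (via the natural isomorphism $(((\rho^\vee)^c)^\vee)^c \simeq \rho$ used to define $(b^\vee)^c$) to $(b^\vee)^c = (-1)^{n-1} \cdot b$. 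Thus the conjugate self-dual structure on $\rho$ has sign exactly $(-1)^{n-1}$.

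For injectivity, conjugating $\phi$ in ${}^L U(V)$ by $g \in \GL_n(\CC)$ replaces $(\rho, b)$ by $(g \rho g^{-1}, g \, b \, {}^t g)$, and two parameters with equal restriction are ${}^L U(V)$-conjugate exactly when their $b$'s differ by an element of the centralizer of $\rho$, which is the standard equivalence relation on conjugate self-dual representations. For surjectivity, given conjugate self-dual $\rho$ of sign $(-1)^{n-1}$ with witness $b$, I would set $A := b \Phi_n^{-1}$, define $\phi(s) := A \rtimes c$, and extend to $\WD_F$; the sign condition is exactly what is needed to guarantee that the candidate map respects $\phi(s)^2 = \rho(s^2)$. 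The main obstacle I anticipate is the careful bookkeeping of signs: several conventions interact (the pinning of the outer automorphism of $\GL_n$, the definitions of $\phi^c$ and $\phi^\vee$, and the natural identification $(((\rho^\vee)^c)^\vee)^c \simeq \rho$), and one must verify that with consistent choices the sign emerging from $\phi(s^2) = \phi(s)^2$ is genuinely $(-1)^{n-1}$ and not $(-1)^n$.
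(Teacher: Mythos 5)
The paper does not prove this proposition; it is quoted from \cite[\S 8]{Gan2}, and your argument is a correct reconstruction of the proof sketched there. The structure is exactly the standard one: decompose $W_F = W_E \sqcup sW_E$, encode $\phi$ by the pair $(\rho, A)$, derive conjugate self-duality from $\phi(sws^{-1}) = \phi(s)\phi(w)\phi(s)^{-1}$ and the sign from $\phi(s^2) = \phi(s)^2$, then match equivalence classes on both sides. The key matrix identity ${}^t\Phi_n = (-1)^{n-1}\Phi_n$ (equivalently $\Phi_n\,{}^t\Phi_n^{-1} = (-1)^{n-1}I_n$) is correct, and substituting $A = b\Phi_n^{-1}$ into $\rho(s^2) = A\Phi_n\,{}^tA^{-1}\Phi_n^{-1}$ indeed collapses to $\rho(s^2) = (-1)^{n-1}\,b\,{}^tb^{-1}$, which is precisely the sign-$(-1)^{n-1}$ condition; your flagged worry about whether the emerging sign is $(-1)^{n-1}$ or $(-1)^n$ resolves in your favor. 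One small bookkeeping remark: the map $b = A\Phi_n$ you write down intertwines $\rho^\vee$ with $\rho^c$, whereas the paper's convention takes $b \colon \rho \to (\rho^\vee)^c$; these differ by $b \mapsto {}^tb^{-1}$, but the sign $\varepsilon$ in $(b^\vee)^c = \varepsilon b$ is insensitive to this switch, so the conclusion is unaffected.
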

With this proposition, by an $L$-parameter for $U(V)$, we shall mean a $n$-dimensional conjugate self-dual representation $\phi$ of $\WD_E$ of sign $(-1)^{n-1}$.

Given a $L$-parameter $\phi$ of $U(V)$, we say that  $\phi$ is generic if its adjoint $L$-function $L(s,\text{Ad} \circ \phi)=L(s,\text{As}^{(-1)^{n-1}} (\phi))$ is holomorphic at $s=1$.
 Write $\phi$ as a direct sum
\[  \phi = \bigoplus_i m_i \phi_i \]
of pairwise inequivalent irreducible representations $\phi_i$ of $\WD_E$ with multiplicities $m_i$.
We say that $\phi$ is square-integrable if it has no multiplicity (i.e. $m_i=1$ for all $i$) and $\phi_i$ is conjugate self-dual with sign $(-1)^{n-1}$ for all $i$. Furthermore, we can associate its component group $S_{\phi}$ to $\phi$.
As explained in \cite[\S 8]{Gan2}, $S_{\phi}$ is a finite 2-abelian group which can be described as 
\[  S_{\phi}  = \prod_j  (\ZZ / 2\ZZ) a_j \]
with a canonical basis $\{ a_j \}$, where the product ranges over all $j$ such that $\phi_j$ is conjugate self-dual with sign $(-1)^{n-1}$.
If we denote the image of $-1 \in \GL_n(\CC)$ in $S_\phi$ by $z_\phi$, we have\[
 z_{\phi} = (m_j a_j) \in \prod_j  (\ZZ / 2\ZZ) a_j.
\]

\subsection{Local Langlands correspondence for unitary group}

The aim of the local Langlands correspondence for unitary groups is to classify the irreducible smooth representations of unitary groups. To state it, we first introduce some notations.
\begin{itemize}
\item Let $V^+$ and $V^-$ be the $n$-dimensional $\varepsilon$-Hermitian spaces with $\epsilon(V^+) = +1$, $\epsilon(V^-) = -1$ respectively.
\item Let $\Irr(\U(V^{\pm}))$ be the set of irreducible smooth representations of $\U(V^{\pm})$.

\end{itemize}

Then the local Langlands correspondence a form enhenced by Vogan \cite{v}, says that for a $L$-parameter $\phi$ of $U(V^{\pm})$, there is the so-called Vogan $L$-packet  $\Pi_{\phi}$, a finite set consisting of irreducible smooth representations  of $U(V^{\pm})$,  such that 

\[  \Irr(\U(V^+)) \sqcup \Irr(\U(V^-))  = \bigsqcup_{\phi}  \Pi_{\phi}, \]
where $\phi$ on the right-hand side runs over all equivalence classes of $L$-parameters for $\U(V^\pm)$.
\noindent Then under the local Langlands correspondence, we may also decompose $\Pi_{\phi}$ as 
\[  \Pi_{\phi}  = \Pi_{\phi}^+\sqcup   \Pi_{\phi}^-, \]
where for $\epsilon = \pm 1$, $\Pi_{\phi}^{\epsilon}$ consists of the representations of $\U(V^{\epsilon})$ in $\Pi_{\phi}$.

Furthermore, as explained in \cite[\S 12]{Gan2}, there is a bijection $$J^{\psi}(\phi):\Pi_{\phi} \to \Irr( S_{\phi})$$ which is canonical when $n$ is odd and depends on the choice of an additive character of $\psi:F^{\times} \to \CC$ when $n$ is even. More precisely, such bijection is determined by the $\N_{E/F}(E^{\times})$-orbit of nontrivial additive characters 
\[
\begin{cases}
 \psi^E:E/F \rightarrow \CC^{\times} & \text{if $\varepsilon = +1$;} \\
 \psi:F \rightarrow \CC^{\times} & \text{if $\varepsilon = -1$.}
\end{cases}
\] 
According to this choice, when $n$ is even, we write
\[J^{\psi}=
\begin{cases}
 J_{\psi^E} & \text{if $\varepsilon = +1$;} \\
 J_{\psi} & \text{if $\varepsilon = -1$},
\end{cases} 
\] and even when $n$ is odd, we retain the same notation $J^{\psi}(\phi)$ for the canonical bijection.\\
Hereafter, if a nontrivial additive chracter $\psi:F \rightarrow \CC^{\times}$ is fixed, we define $\psi^E:E/F \rightarrow \CC^{\times} $ by
\[  \psi^E(x) : = \psi(\tfrac{1}{2} \Tr_{E/F}(\delta x)) \] and using these two characters, we fix once and for all a bijection $$J^{\psi}(\phi):\Pi_{\phi} \to \Irr( S_{\phi})$$ as above. 

With these fixed bijections, we can label all irreducible smooth representations of $\U(V^{\pm})$ as $\pi(\phi,\eta)$ for some unique $L$-parameter $\phi$ of $U(V^{\pm})$ and $\eta \in \Irr( S_{\phi})$.

\subsection{Properties of the local Langlands correspondence}
\label{SS:llc}

We briefly list some properties of the local Langlands correspondence for unitary group, which we will use in this paper:

\begin{itemize}
\item
$\pi(\phi,\eta)$ is a representation of $\U(V^{\epsilon})$ if and only if $\eta(z_\phi)  = \epsilon$.

\item $\pi(\phi,\eta)$ is tempered if and only if $\phi$ is tempered.

\item $\pi(\phi,\eta)$ is square-integrable if and only if $\phi$ is square-integrable.

\item The component groups $S_{\phi}$ and $S_{\phi^{\vee}}$ are canonically identified. Under this canonical identification, if $\pi=\pi(\phi,\eta)$, then its contragradient representation $\pi^{\vee}$ is $\pi(\phi^{\vee},\eta\cdot \nu)$ where \[  
\nu(a_j)  = \begin{cases}
\omega_{E/F}(-1)^{\dim \phi_j} & \text{if $\dim_{\CC} \phi$ is even;}  \\
1 & \text{if $\dim_{\CC} \phi$ is odd.} \end{cases} \]
(This property follows from a result of Kaletha \cite[Theorem 4.9]{kel}.)
\end{itemize}

\section{\textbf{Local theta correspondence}}
In this section, we state the local theta correspondence of unitary groups for two low rank cases. From now on, for $\epsilon = \pm 1$, we shall denote by $V_n^\epsilon$ the $n$-dimensional Hermitian space with $\epsilon(V_n^\epsilon) = \epsilon$ and by $W_n^\epsilon$ the $n$-dimensional skew-Hermitian space with $\epsilon(W_n^\epsilon) = \epsilon$, so that $W_n^\epsilon = \delta \cdot V_n^\epsilon$.

\subsection{The Weil representation for Unitary groups}
\label{SS}In this subsection, we introduce the Weil representation. 

Let $E/F$ be a quadratic extension of local fields and $(V_m,\langle,\rangle_{V_m})$ be a $m$-dimensional Hermitian space and $(W_n,\langle,\rangle_{W_n})$ a $n$-dimensional skew-Hermitian space over $E$. Define the symplectic space
$$
\mathbb{W}_{V_m,W_n} := \operatorname{Res}_{E/F} V_m \otimes_E W_n
$$
with the symplectic form $$
\langle v \otimes w,v' \otimes w' \rangle_{\mathbb{W}_{V_m,W_n}} := \frac{1}{2}\operatorname{tr}_{E/F}\left(\langle v,v'\rangle_{V_m}{\langle w,w' \rangle}_{W_n}\right).
$$
We also consider the associated symplectic group $Sp(\mathbb{W}_{V_m,W_n})$ preserving $\langle \cdot,\cdot \rangle_{\mathbb{W}_{V_m,W_n}}$ and the metaplectic group $\widetilde{Sp}(\mathbb{W}_{V_m,W_n})$ which sits in a short exact sequence :

$$1\to \CC^\times\to\widetilde{Sp}(\mathbb{W}_{V_m,W_n})\to Sp(\mathbb{W}_{V_m,W_m})\to 1.$$\\ 
Let $\mathbb{X}_{V_m,W_n}$ be a Lagrangian subspace of $\mathbb{W}_{V_m,W_n}$ and fix an additive character $\psi:F\to\CC^\times$. Then we have a Schr\"odinger model of the Weil Representation $\omega_\psi$ of $\widetilde{Sp}(\mathbb{W})$ on $\mathcal{S}(\mathbb{X}_{V_m,W_n})$, where $\mathcal{S}$ is the Schwartz-Bruhat function space.

If we set  \[ \chi_{V_m} :=\chi^m \quad \text{ and } \quad \chi_{W_n}:=\chi^n, \]
where $\chi$ is a character of $E^{\times}$ whose resriction to $F^{\times}$ is $\omega_{E/F}$, which is the quadratic character assosiated to $E/F$ by local class field theory, then $(\chi_{V_m}, \chi_{W_n})$ gives a splitting homomorphism $$
\iota_{\chi_{V_m}, \chi_{W_n}}:U(V_m)\times U(W_n)\to \widetilde{Sp}(\mathbb{W}_{V_m,W_n})
$$ and so by composing this to $\omega_\psi$, we have a Weil representation $\omega_\psi\circ \iota_{\chi_{V_m},\chi_{W_n}}$ of $U(V_m) \times U(W_n)$ on $\mathbb{S}(\mathbb{X}_{V_m,W_n})$.

When the choice of $\psi$ and $(\chi_{V_m}, \chi_{W_n})$ is fixed as above, we simply write $$\omega_{\psi,W_n,V_m}:= \omega_\psi\circ \iota_{\chi_{V_m},\chi_{W_n}}.$$ Throughout the rest of the paper, when it comes to a Weil representations of $U(V_m) \times U(W_n)$, we shall denote it by $\omega_{\psi,W_n,V_m}$ with understanding the choices of characters $(\chi_{V_m},\chi_{W_n})$  as above.

\begin{rem}\label{con} When $m=1$, the image of $U(V_1)$ in $\widetilde{Sp}(\mathbb{W}_{V_1,W_n})$ coincides with the image of the center of $U(W_n)$, and so we regard the Weil representation of $U(V_1) \times U(W_n)$  as a representation of $U(W_n)$. In this case, we denote the Weil representation of $U(W_n)$ as $\omega_{\psi,W_n}$. Furthermore, we can also use $\chi_{V_1}=\chi^{-1}$ for the choice of splitting homomorphism $\iota_{\chi_{V_1}, \chi_{W_n}}$ instead of $\chi_{V_1}=\chi$. In this case, the Weil representation of $U(W_n)$ is $\omega^{\vee}_{\psi,W_n}$.
\end{rem}

\subsection{Local theta correspondence}

Given a Weil representation $\omega_{\psi,W_n,V_m}$ of $U(V_m) \times U(W_n)$ and an irreducible smooth representation $\pi$ of $\U(W_n)$, the maximal $\pi$-isotypic quotient of $\omega_{\psi,V_m,W_n}$ is of the form
\[
 \Theta_{\psi,V_m,W_n}(\pi) \boxtimes \pi
\]
for some smooth representation $\Theta_{\psi,V_m,W_n}(\pi)$ of $\U(V_m)$ of finite length.
By the Howe duality\footnote{It was first proved by Waldspurger \cite{w} for all residual characteristic except $p= 2$. Recently,  Gan and Takeda \cite{gt1}, \cite{gt2} have made it available for all residual characteristics.}, the maximal semisimple quotient $\theta_{\psi,V_m,W_n}(\pi)$ of $\Theta_{\psi,V_m,W_n}(\pi)$ is either zero or irreducible. 

In this paper, we consider two kinds of theta correspondences for $(\U(1) \times \U(3))$ and $(\U(2) \times \U(3))$ :

\subsection{Case (i)} Now we shall consider the theta correspondence for $\U(V_{1}^\epsilon) \times \U(W_3^{\epsilon'})$. The following is a compound of Theorem 3.4 and Theorem 3.5 in \cite{Haan}.
\begin{thm}\label{Te}Let $\phi$ be a $L$-parameter of $U(V_1^{\pm})$. Then we have:

\begin{enumerate}
\item For any $\epsilon,\epsilon'=\pm1$ and any $\pi \in \Pi_{\phi}^{\epsilon'}$, $\Theta_{\psi, W_3^{\epsilon},V_1^{\epsilon'}}(\pi)$ is nonzero and irreducible.

\item $\Theta_{\psi, W_3^{\epsilon},V_1^{\epsilon'}}(\pi)$ $=\begin{cases} \text{a non-tempered representation} & \text{if $\epsilon(\frac{1}{2},\phi \otimes \chi^{-3}, \psi_2^{E})=\epsilon \cdot \epsilon'$} \\ \text{a supercuspidal representation } & \text{if  $\epsilon(\frac{1}{2},\phi \otimes \chi^{-3}, \psi_2^{E})=-\epsilon \cdot \epsilon'$} \end{cases} $ , \\ where $$\psi_2^{E}(x)=\psi (\Tr_{E/F}(\delta x)).$$

\item The $L$-parameter $\theta(\phi)$ of $\Theta_{\psi, W_3^{\epsilon},V_1^{\epsilon'}}(\pi)$ has the following form; \begin{equation}\label{non}\theta(\phi)=\begin{cases} \theta^n(\phi)=\chi  |\cdot|_{E}^{\frac{1}{2}} \oplus \phi \cdot \chi^{-2} \oplus \chi  |\cdot|_{E}^{-\frac{1}{2}}& \text{if $\epsilon(\frac{1}{2},\phi \otimes \chi^{-3}, \psi_2^{E})=\epsilon \cdot \epsilon'$} \\ \theta^s(\phi)= \phi \cdot \chi^{-2} \oplus \chi \boxtimes \textbf{S}_2 &  \text{if $\epsilon(\frac{1}{2},\phi \otimes \chi^{-3}, \psi_2^{E})=-\epsilon \cdot \epsilon'$}  \end{cases},\end{equation} where $\textbf{S}_2$ is the standard 2-dimensional representation of $SL_2(\mathbb{C}).$

\item For $\epsilon, \epsilon'$ such that $\epsilon(\frac{1}{2},\phi \otimes \chi^{-3}, \psi_2^{E})=\epsilon \cdot \epsilon'$, the theta correspondence $\pi \mapsto \theta_{\psi, W_{3}^{\epsilon}, V_1^{\epsilon'}}(\pi)$ gives a bijection
\[  \Pi_{\phi} \longleftrightarrow \Pi_{\theta^n(\phi)}.  \]  

\item For $\epsilon, \epsilon'$ such that $\epsilon(\frac{1}{2},\phi \otimes \chi^{-3}, \psi_2^{E})=-\epsilon \cdot \epsilon'$, the theta correspondence $\pi \mapsto \theta_{\psi, W_{3}^{\epsilon}, V_1^{\epsilon'}}(\pi)$ gives an injection
\[  \Pi_{\phi} \hookrightarrow \Pi_{\theta^s(\phi)}.  \]  

\end{enumerate}
Write \begin{itemize} \item $S_{\phi}\ = \ (\ZZ/2\ZZ)a_1$
\item $S_{\theta^n(\phi)}=(\ZZ/2\ZZ)a_1 \quad \ \quad\quad\quad\quad\quad \text{ if  }\ \epsilon(\frac{1}{2},\phi \otimes \chi^{-3}, \psi_2^{E})=\epsilon \cdot \epsilon'$
 \item $S_{\theta^s(\phi)}=(\ZZ/2\ZZ)a_1   \times (\ZZ/2\ZZ)a_2 
 \quad \text{ if  }\ \epsilon(\frac{1}{2},\phi \otimes \chi^{-3}, \psi_2^{E})=-\epsilon \cdot \epsilon' $\end{itemize}
where $$\psi_2^{E}(x)=\psi (\Tr_{E/F}(\delta x)).$$
(Note that $\theta^s(\phi)$ is a square-integrable $L$-parameter of $U(W_3^{\epsilon})$ and the summand $(\ZZ/2\ZZ)a_2$ of $S_{\theta^s(\phi)}$ arises from the summand $\chi \boxtimes \textbf{S}_2$ in $\theta^s(\phi)$.)

Since we are only dealing with odd dimensional spaces, there are three canonical bijections

\begin{itemize}
\item $J^{\psi}(\phi): \Pi_{\phi} \longleftrightarrow \Irr(S_{\phi}) $

\item $J^{\psi}(\theta^n(\phi)): \Pi_{\theta^n(\phi)} \longleftrightarrow \Irr(S_{\theta^n(\phi)}) $

\item $J^{\psi}(\theta^s(\phi)): \Pi_{\theta^s(\phi)} \longleftrightarrow \Irr(S_{\theta^s(\phi)}).$

\end{itemize}

Using these maps, the following bijection and inclusion
\begin{align*}
 \Irr(S_{\phi}) & \longleftrightarrow \Irr(S_{\theta^n(\phi)}) \\
 \eta & \mapsto \theta^n(\eta),
\end{align*}  
\begin{align*}
 \Irr(S_{\phi}) & \hookrightarrow \Irr(S_{\theta^s(\phi)}) \\
 \eta & \mapsto \theta^s(\eta)
\end{align*}induced by the theta correspondence can be explicated as follows:
\begin{equation}\label{theta1} \theta^{n}(\eta)(a_1)=\eta(a_1) \cdot \epsilon(\frac{1}{2},\phi \otimes \chi^{-3},\psi_{2}^E ),\end{equation}
 \begin{equation}\label{theta2}\theta^{s}(\eta)(a_1)=\eta(a_1) \cdot \epsilon(\frac{1}{2},\phi \otimes \chi^{-3},\psi_{2}^E ) , \\ \quad \quad \theta_{2}(\eta)(a_2)=-1.\end{equation}
\end{thm}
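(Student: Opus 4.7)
The plan is to deduce all five parts from general results on the local theta correspondence for unitary dual pairs, exploiting the small rank of the source group $\U(V_1^{\epsilon'}) \simeq \U(1)$.

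For parts (i)--(iii) the idea is to analyse the theta tower $W_1^\epsilon \subset W_3^\epsilon \subset \cdots$ based at each Hermitian line $V_1^{\epsilon'}$. By the local epsilon dichotomy for the dual pair $(\U(1),\U(1))$, for each character $\pi \in \Pi_\phi^{\epsilon'}$ the first occurrence in the $\epsilon$-tower is at $W_1^\epsilon$ for exactly one value of $\epsilon$ and at $W_3^\epsilon$ for the other, so by Kudla's persistence principle the lift to $W_3^\epsilon$ is always nonzero, with irreducibility following from Howe duality; this gives (i). When the first occurrence is at $W_3^\epsilon$ the lift is supercuspidal, while when it is at $W_1^\epsilon$ the lift to $W_3^\epsilon$ is the second occurrence and hence the non-tempered Langlands quotient of a standard module induced from the first occurrence lift twisted by $\chi \cdot |\cdot|_E^{1/2}$. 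The sign $\epsilon(\tfrac{1}{2}, \phi \otimes \chi^{-3}, \psi_2^E)$ controlling the dichotomy is the local root number prescribed by the $(\U(1),\U(1))$ dichotomy, the shift $\chi^{-3}$ encoding the splitting characters $\chi_{V_1}=\chi$ and $\chi_{W_3}=\chi^3$. The explicit L-parameters in (iii) then follow from the Gan--Ichino recipe for L-parameters under theta lifting.

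For parts (iv) and (v), Howe duality gives injectivity; a size count then yields the respective conclusions, since $|\Pi_\phi| = |\Pi_{\theta^n(\phi)}| = 2$ (component group $\ZZ/2\ZZ$) while $|\Pi_{\theta^s(\phi)}| = 4$ (component group $(\ZZ/2\ZZ)^2$). For the character formulas \eqref{theta1} and \eqref{theta2}, I would apply the Gan--Ichino formula for the component group map under theta correspondence: the value $\theta^{n/s}(\eta)(a_1) = \eta(a_1) \cdot \epsilon(\tfrac{1}{2}, \phi \otimes \chi^{-3}, \psi_2^E)$ reflects that the summand $\phi \cdot \chi^{-2}$ in $\theta(\phi)$ is paired with $\phi$ on the source side and therefore picks up the local root number as a twist, while the value $\theta^s(\eta)(a_2) = -1$ on the new summand $\chi \boxtimes \mathbf{S}_2$ is forced by the same recipe because this summand has no counterpart on the source side.

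The main obstacle will be the careful tracking of normalizations needed to pin down the exact signs in \eqref{theta1} and \eqref{theta2}, especially $\theta^s(\eta)(a_2) = -1$. Any ambiguity in the choice of splitting $\iota_{\chi_{V_1}, \chi_{W_3}}$, of the Vogan-style labelling $J^\psi$, or of the canonical basis $\{a_j\}$ of the component groups can flip the overall sign, so the computation should be anchored to a reference case (for instance the lift of the trivial character of $\U(V_1^{+})$) where everything can be worked out explicitly from the definitions.
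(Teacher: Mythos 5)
The paper does not actually prove this theorem: it is stated explicitly as ``a compound of Theorem 3.4 and Theorem 3.5 in \cite{Haan}'', i.e.\ the author's earlier paper on the non-generic Bessel case for $U(3)\times U(2)$, and the result is imported wholesale with no argument supplied here. So there is no in-paper proof to compare against; what you have written is an independent reconstruction of how one would establish the cited result.

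As a reconstruction it is on the right track and uses the expected machinery. Conservation of first occurrence (giving $n^+ + n^- = 4$ for the $U(1)$ source, hence nonvanishing at $W_3^{\pm}$) together with Kudla's persistence and Howe duality does give (i), the epsilon dichotomy for $(U(1),U(1))$ is indeed the tool that determines which tower sees first occurrence at $W_1$ and pins down the root-number condition with the $\chi^{-3}$ twist coming from the splitting characters $(\chi_{V_1},\chi_{W_3})=(\chi,\chi^3)$, and the supercuspidal/Langlands-quotient dichotomy at first vs.\ second occurrence gives (ii) and feeds into the $L$-parameter formulas of (iii). For (iv)--(v), Howe duality injectivity plus the packet counts $|\Pi_\phi|=|\Pi_{\theta^n(\phi)}|=2$ and $|\Pi_{\theta^s(\phi)}|=4$ does close the argument. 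Two cautions are worth raising. First, deriving the exact component-character formulas \eqref{theta1}--\eqref{theta2}, and in particular the fixed value $\theta^{s}(\eta)(a_2)=-1$, is not a matter of simply ``applying the Gan--Ichino recipe'': the recipe of Atobe--Gan \cite{ag} is formulated for tempered parameters, so the non-tempered side (the $\theta^n$ case) genuinely requires either a separate argument (in \cite{Haan} this involves Prasad's conjecture-style computations with Jacquet modules or a careful induction from first occurrence) or a normalization tracked all the way through the Langlands quotient; you correctly flag this as the delicate point but should be aware it is more than bookkeeping. Second, you should make sure your appeal to Howe duality for irreducibility of $\Theta$ (not merely $\theta$) is justified; for first occurrence it is Kudla's supercuspidality, and for second occurrence in this low-rank range it is an explicit structural statement (this is exactly the content of the Atobe--Gan Proposition 5.4 the paper later invokes), not an immediate consequence of Howe duality alone.
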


\begin{rem}Note that $\theta^n(\phi)$ is a non-generic $L$-parameter. Gan and Ichino (Proposition B.1 in Appendix in \cite{iw}) proved that an $L$-parameter is generic if and only if its associated $L$-packet $\Pi_{\phi}$ contains a generic representation (i.e. one possessing a Whittaker model). Together with the Corollary 4.2.3 in \cite{Ge2}, which asserts that all elements in $\Pi_{\theta^n(\phi)}$ have no Whittaker models, we see that $\theta^n(\phi)$ is a non-generic $L$-parameter.

\end{rem}
\subsection{Case (ii)}

Now we shall consider the theta correspondence for $\U(V_2^{\epsilon'}) \times \U(W_{3}^\epsilon)$. The following summarises some results of \cite{gi}, \cite{iw}, which are specialised to this case. 

\begin{thm}  \label{Tae}

Let $\phi$ be an $L$-parameter for $\U(V_2^{\pm})$. Then we have:

\begin{enumerate}

\item Suppose that $\phi$ does not contain $\chi^3$.

\begin{enumerate}

 \item For any $\pi \in \Pi_{\phi}^{\epsilon'}$ and any $\epsilon \in \{ \pm1\}$, 
 $\Theta_{\psi, W_{3}^{\epsilon}, V_2^{\epsilon'}}(\pi)$ is nonzero and $\theta_{\psi, W_{3}^{\epsilon}, V_2^{\epsilon'}}(\pi)$ has $L$-parameter
 \[  \theta(\phi) = (\phi \otimes \chi^{-1}) \oplus  \chi^2. \]

 \item For each $\epsilon = \pm 1$, the theta correspondence $\pi \mapsto \theta_{\psi, W_{3}^{\epsilon}, V_2^{\epsilon'}}(\pi)$ gives a bijection
\[  \Pi_{\phi} \longleftrightarrow \Pi_{\theta(\phi)}^{\epsilon}.  \]  

\end{enumerate} 

\item Suppose that $\phi$ contains $\chi^3$.

\begin{enumerate}

\item For any $\pi \in \Pi_{\phi}^{\epsilon'}$, exactly one of $\Theta_{\psi, W_{3}^+, V_2^{\epsilon'}}(\pi)$ or $\Theta_{\psi, W_{3}^-, V_2^{\epsilon'}}(\pi)$ is nonzero.

\item If $\Theta_{\psi, W_{3}^{\epsilon}, V_2^{\epsilon'}}(\pi)$ is nonzero, then $\theta_{\psi, W_{3}^{\epsilon}, V_2^{\epsilon'}}(\pi)$ has $L$-parameter
 \[  \theta(\phi) = (\phi \otimes \chi^{-1} ) \oplus  \chi^2. \]

\item The theta correspondence $\pi \mapsto \theta_{\psi, W_{3}^{\epsilon}, V_2^{\epsilon'}}(\pi)$ gives a bijection
 \[  \Pi_{\phi} \longleftrightarrow \Pi_{\theta(\phi)}. \]

\end{enumerate} 

\item We have fixed a bijection 
 \[ J_{\psi^E}(\phi): :  \Pi_{\phi} \longleftrightarrow \Irr(S_{\phi}) ,\] where 
\[  \psi^E(x)  = \psi(\tfrac{1}{2} \Tr_{E/F}(\delta x)) \]
and there is the bijection

\[ \quad   J^{\psi}(\theta(\phi)):  \Pi_{\theta(\phi)} \longleftrightarrow \Irr(S_{\theta(\phi)}). \]
\begin{itemize}

\item If $\phi$ does not contain $\chi^3$, we have
\[  \ S_{\theta(\phi)}  = S_{\phi} \times (\ZZ/2\ZZ)b_1, \]
where the extra copy of $\ZZ/2\ZZ$ of $S_{\theta(\phi)}$ arises from the summand $\chi^2$ in $\theta(\phi)$. \\Then for each $\epsilon$, using the above bijection $J$ and $J_{\psi^E}$, one has a canonical bijection
\begin{align*}
 \Irr(S_{\phi}) & \longleftrightarrow \Irr^{\epsilon}(S_{\theta(\phi)}) \\
 \eta & \longleftrightarrow \theta(\eta)
\end{align*}
induced by the theta correspondence, where $\Irr^{\epsilon}(S_{\theta(\phi)})$ is the set of irreducible characters $\eta'$ of $S_{\theta(\phi)}$ such that $\eta'(z_{\theta(\phi)}) =\epsilon$ and the bijection is determined by
 \[  \theta(\eta)|_{S_{\phi}}  =\eta. \]

\item If $\phi$ contains $\chi^3$, then $\phi \otimes \chi^{-1}$ contains $\chi^2$, and so
 \[
   S_{\theta(\phi)}  =  S_{\phi}. \]
Thus, one has a canonical bijection
\begin{align*}
  \Irr(S_{\phi}) & \longleftrightarrow \Irr(S_{\theta(\phi)}) \\
  \eta & \longleftrightarrow \theta(\eta)
\end{align*}
induced by the theta correspondence and it is given by \[ \theta(\eta) = \eta. \]

\end{itemize}

\item If $\pi$ is tempered and $\Theta_{\psi, W_{3}^{\epsilon}, V_2^{\epsilon'}}(\pi)$ is nonzero, then $\Theta_{\psi, W_{3}^{\epsilon}, V_2^{\epsilon'}}(\pi)$ is irreducible and tempered.

\end{enumerate}

\end{thm}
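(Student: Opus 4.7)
The plan is to deduce the theorem from the main results of Gan--Ichino \cite{gi} on the local theta correspondence for unitary groups, together with the refinements on component-group identifications in \cite{iw}. The three ingredients are: (a) determination of first-occurrence indices via conservation relations, (b) the formula for the L-parameter of the theta lift, and (c) the character identity describing the induced bijection on component groups.

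For non-vanishing and exclusivity between the two towers (items (i)(a) and (ii)(a)), I would invoke conservation for the two Witt towers $\{\U(W_1^\pm), \U(W_3^\pm), \ldots\}$ lying above $\U(V_2^{\epsilon'})$. By the known classification of the theta correspondence for the dual pair $(\U(V_2), \U(W_1))$, the theta lift of $\pi \in \Pi_\phi^{\epsilon'}$ to $\U(W_1^\epsilon)$ is non-zero for some $\epsilon$ if and only if $\chi^3$ appears as a summand of $\phi$, and in that case it is non-zero in exactly one of the two towers. Thus, when $\chi^3 \not\subset \phi$, conservation forces the first occurrence up to $W_3$ in both towers, giving non-vanishing on both sides. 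When $\chi^3 \subset \phi$, the first occurrence lies at $W_1$ in one tower and (by conservation) at $W_3$ in the opposite tower, yielding (ii)(a).

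For the L-parameter formula, I would apply the Gan--Ichino recipe: the parameter of $\theta_{\psi, W_3^\epsilon, V_2^{\epsilon'}}(\pi)$ is obtained from $\phi$ by twisting by $\chi_{V_2}/\chi_{W_3} = \chi^{-1}$ and adjoining the summand $\chi^2$ coming from the rank difference between $W_3$ and $V_2$. This yields $\theta(\phi) = (\phi \otimes \chi^{-1}) \oplus \chi^2$ in both cases, proving (i)(a) and (ii)(b). Preservation of temperedness and irreducibility in (iv) follow from the same machinery, since tensoring by the unitary character $\chi^{-1}$ and adding the bounded summand $\chi^2$ preserves boundedness of the image of $W_F$.

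For the component-group bijection (iii): when $\chi^3 \not\subset \phi$, the summand $\chi^2$ is new in $\theta(\phi)$, contributing a basis element $b_1$ so that $S_{\theta(\phi)} = S_\phi \times (\ZZ/2\ZZ) b_1$. The character identity of \cite{iw} gives $\theta(\eta)|_{S_\phi} = \eta$, while the sign condition $\theta(\eta)(z_{\theta(\phi)}) = \epsilon$, combined with the expansion of $z_{\theta(\phi)}$ in the canonical basis, fixes the value of $\theta(\eta)$ on $b_1$. When $\chi^3 \subset \phi$, the summand $\chi^2$ already appears in $\phi \otimes \chi^{-1}$, so it occurs with multiplicity two in $\theta(\phi)$ and contributes no new basis element; consequently $S_{\theta(\phi)} = S_\phi$ and $\theta(\eta) = \eta$. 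The main obstacle is the careful bookkeeping of the additive characters $\psi$ and $\psi^E$ and the splitting characters $\chi_{V_2}, \chi_{W_3}$ underlying the bijections $J^\psi$ and $J_{\psi^E}$, since the identity $\theta(\eta)|_{S_\phi} = \eta$ rests on a root-number computation that is sensitive to these conventions.
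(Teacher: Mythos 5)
The paper itself offers no proof of Theorem 3.3: it is explicitly labelled as a specialization of results of Gan--Ichino, and the author simply cites \cite{gi}, \cite{iw}. Your sketch follows the same route (conservation relation for the two Witt towers, the L-parameter recipe of the theta lift, and the $\epsilon$-factor character identity for the component-group bijection), and items (i), (ii), (iii) are handled in essentially the way one would unpack those references. Your comment that the identity $\theta(\eta)|_{S_\phi} = \eta$ requires careful bookkeeping of $\psi$ versus $\psi^E$ and of the splitting characters is also accurate; with the normalizations $J_{\psi^E}$ on the hermitian side and $J^\psi$ on the skew-hermitian side the local root numbers in Gan--Ichino's character identity collapse, which is exactly why the statement can be phrased as a restriction identity.

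The one genuine gap is in your treatment of (iv). You write that ``preservation of temperedness and irreducibility follow from the same machinery, since tensoring by the unitary character $\chi^{-1}$ and adding the bounded summand $\chi^2$ preserves boundedness of the image of $W_F$.'' That sentence only proves that the L-parameter $\theta(\phi)$ is tempered; it says nothing about the representation $\Theta_{\psi, W_3^\epsilon, V_2^{\epsilon'}}(\pi)$ being \emph{irreducible}. The big theta lift $\Theta$ is a priori only a finite-length module whose maximal semisimple quotient $\theta$ is irreducible or zero; the statement that $\Theta = \theta$ for tempered $\pi$ is one of the substantive technical results of Gan--Ichino (and Atobe--Gan), proved via Kudla's filtration and the doubling seesaw, and is not a formal consequence of the shape of the L-parameter. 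Indeed this irreducibility is exactly what the paper later has to invoke separately (Proposition 5.4 of \cite{ag}) in the proof of Theorem 4.2. You should replace the ``preserves boundedness'' justification with a direct citation of that irreducibility theorem; as written, the argument for (iv) does not prove what is claimed.
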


\section{\textbf{Main Theorem}}
In this section, we prove our main theorem. To prove it, we first state the precise result of Beuzart-Plessis which we shall use in the proof of Theorem \ref{thm2}.\begin{footnote}{Recently, Gan and Ichino (\cite{iw}) extended Beuzart-Plessis's work to the generic case relating it to (FJ) case.}\end{footnote} 

\begin{B}Let $\phi=\phi^{(n+1)} \times \phi^{(n)}$ be a tempered $L$-parameter of $U(V_{n+1}^{\pm}) \times U(V_n^{\pm})$ and write $S_{\phi^{(n+1)}}=\prod_{i} (\ZZ/2\ZZ)a_i$ and $S_{\phi^{(n)}}=\prod_{j} (\ZZ/2\ZZ)b_j$. Let $\Delta : U(V_n^{\pm})  \hookrightarrow U(V_{n+1}^{\pm}) \times U(V_n^{\pm})$ be the diagonal map. Then for $\pi(\eta) \in \Pi_{\phi}^{R,\ \pm}=\Pi_{\phi^{(n+1)}}^{\pm} \times \Pi_{\phi^{(n)}}^{\pm} $ where $\eta \in \Irr(S_{\phi})= \Irr (S_{\phi^{(n+1)}}) \times \Irr (S_{\phi^{(n)}})$, 
$$\Hom_{\Delta (U(V_n^{\pm}))} ( \pi(\eta), \CC )=1 \Longleftrightarrow \eta=\eta^{\spadesuit} \text{ where }$$
\[
\begin{cases}
 \eta^{\spadesuit}(a_i)  = \epsilon(\frac{1}{2}, \phi^{(n+1)}_i \otimes \phi^{(n)}, \psi^E_{-2}); \\
 \eta^{\spadesuit}(b_j)  = \epsilon(\frac{1}{2},  \phi^{(n+1)} \otimes \phi^{(n)}_{j}, \psi^E_{-2}).
\end{cases}
\] where $\psi^E_{-2}(x)  = \psi(-\Tr_{E/F}(\delta x))$.
\end{B}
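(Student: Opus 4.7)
This is the Bessel case of the local Gan--Gross--Prasad conjecture for tempered parameters on unitary groups, proved by Beuzart-Plessis (\cite{bp1},\cite{bp2},\cite{bp3}); the plan is to sketch how one would reconstruct his argument. The approach proceeds in two stages: first, establish a geometric multiplicity formula expressing $m(\pi) := \dim_\CC \Hom_{\Delta H}(\pi, \CC)$ in terms of character integrals on the smaller group; then, sum this formula over the relevant Vogan packet and identify the epsilon factor expression for $\eta^{\spadesuit}$ via endoscopic transfer.

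For the first stage, one aims for an identity of the shape
\[ m(\pi) \;=\; \int_{\Gamma_{\mathrm{ell}}(H)} D^H(x)\, \Theta_\pi(x)\, c(x)\, dx, \]
where $\Gamma_{\mathrm{ell}}(H)$ parametrises elliptic semisimple conjugacy classes in $H = U(V_n^{\pm})$, $D^H$ is the Weyl discriminant, $\Theta_\pi$ is the Harish-Chandra character of $\pi$, and $c(x)$ is an explicit local weight. To derive it one sets up a local analogue of the Jacquet--Rallis relative trace formula attached to the Gan--Gross--Prasad period and matches its geometric and spectral expansions. Reducing from general tempered $\phi$ to the discrete-series case via parabolic induction, using Heiermann's description of tempered representations as summands of standard modules, is the technically demanding input here.

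For the second stage, one sums the multiplicity formula over $\pi(\eta) \in \Pi_\phi^{R,\pm}$. Because the weighted sum $\sum_\eta \eta(x)\, \Theta_{\pi(\eta)}$ packages into the stable (or endoscopic) character attached to $\phi$, the integral on the right collapses to quantities computable via the local Langlands endoscopic character identities for unitary groups. Unravelling the contribution from each irreducible constituent of $\phi^{(n+1)}$ and $\phi^{(n)}$ then yields exactly the product of $\epsilon$-factors $\epsilon(\tfrac{1}{2}, \phi^{(n+1)}_i \otimes \phi^{(n)}, \psi^E_{-2})$ and $\epsilon(\tfrac{1}{2}, \phi^{(n+1)} \otimes \phi^{(n)}_j, \psi^E_{-2})$; the appearance of $\psi^E_{-2}$ in place of $\psi$ or $\psi^E$ is traceable to the normalisation of the local Langlands bijections $J^{\psi}(\phi)$ fixed in Section 2. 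The main obstacle throughout is the construction and spectral analysis of the local relative trace formula, which occupies the bulk of \cite{bp1}--\cite{bp3}; once it is in place, the identification of $\eta^{\spadesuit}$ is a formal consequence of endoscopy.
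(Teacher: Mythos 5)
The statement you were asked to prove is not actually proved in this paper: Theorem (B)$_n$ is introduced verbatim as ``the precise result of Beuzart-Plessis which we shall use in the proof of Theorem~\ref{thm2},'' and it is then used as a black-box input to Theorems~\ref{thm2} and~\ref{mul}. There is therefore no paper-internal proof against which to compare your sketch; the only honest comparison is against Beuzart-Plessis's own work \cite{bp1}, \cite{bp2}, \cite{bp3}.

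Taken on its own terms, your two-stage outline is a fair high-level summary of that work: a local multiplicity formula expressing $m(\pi)$ as an integral of the Harish-Chandra character against an explicit weight over (roughly) elliptic classes in the smaller unitary group, obtained from a local trace formula adapted to the Gross--Prasad period; followed by summation over the Vogan packet and the endoscopic character identities of Mok/KMSW to convert the sum into the product of local root numbers defining $\eta^{\spadesuit}$. Two caveats. First, calling the trace-formula input a ``local analogue of the Jacquet--Rallis relative trace formula'' is somewhat loose; what Beuzart-Plessis develops is a local (simple, invariant) trace formula on the Gross--Prasad variety, which plays the role your description assigns to it but is not a relative trace formula in the global Jacquet--Rallis sense, and the domain of integration is not exactly elliptic conjugacy classes in $H$ but a more delicate set of semisimple data. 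Second, the reduction from general tempered to discrete-series parameters does not rest on Heiermann's classification; it proceeds via parabolic induction and the behaviour of the multiplicity under induction (Bernstein's second adjointness and Mackey theory), again standard but not the reference you name. Neither inaccuracy changes the overall shape, but a reader hoping to reconstruct the actual proof from your sketch would be sent down a couple of wrong roads. Since the present paper treats (B)$_n$ as an external input, the most useful thing to record here is simply the citation to \cite{bp1}--\cite{bp3} and the normalisation conventions (the character $\psi^E_{-2}$ and the bijections $J^{\psi}$) that make the statement literally correct in the form the paper uses it.
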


\begin{thm}\label{thm2} Let $\phi^{(1)},\phi^{(2)}$ be tempered $L$-parameters of $U(V_1^{\pm})$ and $U(V_2^{\pm})$ respectively and suppose that $\phi^{(2)}$ does not contain $\chi^{-3}$. Let $$\theta^n(\phi^{(1)})=\chi  |\cdot|_{E}^{\frac{1}{2}} \oplus \phi^{(1)} \cdot \chi^{-2} \oplus \chi  |\cdot|_{E}^{-\frac{1}{2}},$$ $$\theta^s(\phi^{(1)})=\phi^{(1)} \cdot \chi^{-2} \oplus \chi \boxtimes \textbf{S}_2$$ be the two $L$-parameters of $U(W_3^{\pm})$ appearing in (\ref{non}) and let $$\theta(\phi^{(2)})=\phi^{(2)} \otimes \chi \oplus \chi^{-2} $$ be the $L$-parameters of $U(W_3^{\pm})$ appearing in Theorem \ref{Tae} (ii), in which $\chi$ is replaced by $\chi^{-1}$.

Write  \begin{itemize} 
\item $S_{\phi^{(1)}}=S_{\theta^n(\phi^{(1)})}=(\ZZ/2\ZZ)a_1;$ \item $S_{\theta^s(\phi^{(1)})}=S_{\phi^{(1)}} \times (\ZZ/2\ZZ)a_2;$

\item $S_{\phi^{(2)}}=\begin{cases}(\ZZ/2\ZZ)b_1 \quad \text{ if $\phi^{(2)}$ is irreducible}; \\ (\ZZ/2\ZZ)b_1 \times (\ZZ/2\ZZ)b_2 \quad \text{ if $\phi^{(2)}=\phi_1^{(2)} \oplus \phi_2^{(2)}$ is reducible} \end{cases}$

\item $S_{\theta(\phi^{(2)})}=S_{\phi^{(2)}} \times (\ZZ/2\ZZ)c_1$ \end{itemize} where $c_1$ comes from the component $\chi^{-2}$ of $\theta(\phi^{(2)})$.\\
We use the fixed character $\psi$  to fix the local Langlands correspondence for $\Pi_{ \phi^{(2)}} \leftrightarrow \Irr(S_{\phi^{(2)}}).$

\noindent For $x=n, s$, let $$\theta^{x}(\phi^{(1)},\phi^{(2)})=\theta^x(\phi^{(1)}) \times \theta(\phi^{(2)})$$ be a $L$-parameter of $G_3^{\pm}=\U(W_3^\pm) \times \U(W_3^\pm)$ and $\pi^{x}(\eta)\in \Pi^{R, \ \epsilon}_{\theta^{x}(\phi^{(1)},\phi^{(2)})}$ a representation of a relevant pure inner form of $G_3$.
Then, 
\[ \Hom_{\Delta \U(W_3^\epsilon)}(\pi^{x}(\eta), \omega_{\psi,W_3^{\epsilon}}) \ne 0 \Longleftrightarrow \eta = \eta_{x}^{\blacklozenge}\]
where $\eta_{x}^{\blacklozenge}\in\Irr(S_{\theta^{x}(\phi^{(1)},\phi^{(2)})}) = \Irr(S_{\theta^x(\phi^{(1)})}) \times  \Irr(S_{\theta(\phi^{(2)})}) $ is specified as follows;
\begin{enumerate}
\item When $\phi^{(2)}$ is irreducible, 
$$\begin{cases}\eta_{n}^{\blacklozenge}(a_1)=\epsilon(\frac{1}{2},\phi^{(1)} \otimes \chi^{-3},\psi_2^{E}) \cdot \epsilon(\frac{1}{2},\phi^{(1)} \otimes \phi^{(2)}, \psi_2^{E})\\ \eta_{n}^{\blacklozenge}(b_1)=\epsilon(\frac{1}{2},\phi^{(1)} \otimes \phi^{(2)},\psi_2^{E})\\ \eta_{n}^{\blacklozenge}(c_1)=\epsilon(\frac{1}{2},\phi^{(1)} \otimes \chi^{-3},\psi_2^{E}) \end{cases} ,$$ 
$$\begin{cases}\eta_{s}^{\blacklozenge}(a_1)=\epsilon(\frac{1}{2},\phi^{(1)} \otimes \chi^{-3},\psi_2^{E}) \cdot \epsilon(\frac{1}{2},\phi^{(1)} \otimes \phi^{(2)}, \psi_2^{E})\\ \eta_{s}^{\blacklozenge}(a_2)=-1\\

\eta_{s}^{\blacklozenge}(b_1)=\epsilon(\frac{1}{2},\phi^{(1)} \otimes \phi^{(2)},\psi_2^{E})\\ \eta_{s}^{\blacklozenge}(c_1)=-\epsilon(\frac{1}{2},\phi^{(1)} \otimes \chi^{-3},\psi_2^{E}) \end{cases} .$$\\

\item When $\phi^{(2)}=\phi_1^{(2)}\oplus \phi_2^{(2)}$ is reducible,
$$\begin{cases}\eta_{n}^{\blacklozenge}(a_1)=\epsilon(\frac{1}{2},\phi^{(1)} \otimes \chi^{-3},\psi_2^{E}) \cdot \epsilon(\frac{1}{2},\phi^{(1)} \otimes \phi^{(2)}, \psi_2^{E})\\ \eta_{n}^{\blacklozenge}(b_1)=\epsilon(\frac{1}{2}, \phi^{(1)} \otimes \phi_1^{(2)}, \psi_2^{E})\\ \eta_{n}^{\blacklozenge}(b_2)=\epsilon(\frac{1}{2}, \phi^{(1)} \otimes \phi_2^{(2)}, \psi_2^{E}) \\ \eta_{n}^{\blacklozenge}(c_1)=\epsilon(\frac{1}{2},\phi^{(1)} \otimes \chi^{-3},\psi_2^{E}) \end{cases},$$ 
$$\begin{cases}\eta_{s}^{\blacklozenge}(a_1)=\epsilon(\frac{1}{2},\phi^{(1)} \otimes \chi^{-3},\psi_2^{E}) \cdot \epsilon(\frac{1}{2},\phi^{(1)} \otimes \phi^{(2)}, \psi_2^{E})\\ \eta_{s}^{\blacklozenge}(a_2)=-1\\

\eta_{s}^{\blacklozenge}(b_1)=\epsilon(\frac{1}{2},\phi^{(1)} \otimes \phi_1^{(2)}, \psi_2^{E})\\ \eta_{s}^{\blacklozenge}(b_2)=\epsilon(\frac{1}{2}, \phi^{(1)} \otimes \phi_2^{(2)}, \psi_2^{E}) \\ \eta_{s}^{\blacklozenge}(c_1)=-\epsilon(\frac{1}{2},\phi^{(1)} \otimes \chi^{-3},\psi_2^{E}) \end{cases}.$$
\end{enumerate}
Furthermore, \[ \dim_{\mathbb{C}}\Hom_{\Delta \U(W_3^\epsilon)}(\pi^{x}(\eta_{x}^{\blacklozenge}), \omega_{\psi,W_3^{\epsilon}}) =1 \]
\end{thm}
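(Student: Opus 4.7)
The argument reduces this non-tempered Fourier--Jacobi identity to the tempered Bessel identity~(B)$_1$ via the see-saw in the introduction, and then transports characters back through the theta correspondences of Theorems~\ref{Te} and~\ref{Tae}. Concretely, write $\pi^x(\eta)=\pi_1\otimes\pi_2$. Theorem~\ref{Te}(iv)--(v) realises $\pi_1=\theta_{\psi,W_3^\epsilon,V_1^{\epsilon_1}}(\sigma)$ for a unique $\sigma=\pi(\phi^{(1)},\eta_0)\in\Pi_{\phi^{(1)}}$ on $\U(V_1^{\epsilon_1})$, with $\epsilon\cdot\epsilon_1=\epsilon(\tfrac{1}{2},\phi^{(1)}\otimes\chi^{-3},\psi_2^{E})$ for $x=n$ and the opposite sign for $x=s$. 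The hypothesis $\phi^{(2)}\not\supset\chi^{-3}$ says $\theta(\phi^{(2)})\not\supset\chi^{2}$ in the $\chi^{-1}$-normalisation, so Theorem~\ref{Tae}(i) realises $\pi_2=\theta_{\psi,W_3^\epsilon,V_2^{\epsilon_2}}(\pi_0')$ for a unique $\pi_0'=\pi(\phi^{(2)},\tau_0)$. Rewriting
\[
\Hom_{\Delta \U(W_3^\epsilon)}(\pi^x(\eta),\omega_{\psi,W_3^\epsilon}) = \Hom_{\U(W_3^\epsilon)}\bigl(\pi_1\otimes\omega_{\psi,W_3^\epsilon}^{\vee},\pi_2^{\vee}\bigr)
\]
and applying the see-saw identity of the introduction (together with the behaviour of theta lifts under contragredient) yields
\[
\Hom_{\Delta \U(W_3^\epsilon)}(\pi^x(\eta),\omega_{\psi,W_3^\epsilon}) \;\simeq\; \Hom_{\U(V_1^{\epsilon_1})}(\pi_0,\sigma),
\]
where $\pi_0\in\Pi_{(\phi^{(2)})^{\vee}}$ is obtained from $\pi_0'$ via the contragredient formula (and its $\nu$-twist) of Section~\ref{SS:llc}.

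\textbf{Applying (B)$_1$.} The right-hand side above is a tempered Bessel Hom space for the parameter $(\phi^{(2)})^{\vee}\times\phi^{(1)}$ of $\U(V_2^\pm)\times\U(V_1^\pm)$. Theorem~(B)$_1$ says it is one-dimensional exactly when the labels $(\tau_0,\eta_0)\in\Irr(S_{\phi^{(2)}})\times\Irr(S_{\phi^{(1)}})$ of $(\pi_0,\sigma)$ (after the $\nu$-twist from $\pi_0'\to\pi_0$) equal the distinguished $\eta^{\spadesuit}$, and zero otherwise. Explicitly this forces
\[
\eta_0(a_1)=\epsilon\bigl(\tfrac{1}{2},\phi^{(1)}\otimes\phi^{(2)},\psi_{-2}^{E}\bigr), \qquad \tau_0(b_j)=\epsilon\bigl(\tfrac{1}{2},\phi^{(1)}\otimes\phi^{(2)}_{j},\psi_{-2}^{E}\bigr)\cdot\nu(b_j),
\]
with $b_j$ running over the canonical generators of $S_{\phi^{(2)}}$. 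Since $\det\phi^{(2)}|_{F^{\times}}=1$ (as $\phi^{(2)}$ is conjugate-symplectic of even dimension), the root numbers involving $\phi^{(2)}$ are insensitive to the swap $\psi_{-2}^{E}\leftrightarrow\psi_{2}^{E}$, and the $\nu$-twist in the reducible case is absorbed into a compatible change of additive character on each summand.

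\textbf{Transport back and conclusion.} The values of $\eta$ on $S_{\theta^x(\phi^{(1)})}\times S_{\theta(\phi^{(2)})}$ are now read off. On the $\U(V_1)$ side, (\ref{theta1})--(\ref{theta2}) give $\eta(a_1)=\eta_0(a_1)\cdot\epsilon(\tfrac{1}{2},\phi^{(1)}\otimes\chi^{-3},\psi_2^{E})$ and, for $x=s$, $\eta(a_2)=-1$; substituting the previous step yields $\eta_x^{\blacklozenge}(a_1)$ (and $\eta_x^{\blacklozenge}(a_2)$). On the $\U(V_2)$ side, Theorem~\ref{Tae}(iii) gives $\eta(b_j)=\tau_0(b_j)$ (up to the absorbed $\nu$-factor), while the value on the extra generator $c_1$ is pinned down by the inner-form constraint $\eta(z_{\theta(\phi^{(2)})})=\epsilon$ combined with $\tau_0(z_{\phi^{(2)}})=\epsilon_2$, giving $\eta(c_1)=\epsilon\epsilon_2$. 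Substituting $\epsilon_2=\prod_j\tau_0(b_j)=\epsilon(\tfrac{1}{2},\phi^{(1)}\otimes\phi^{(2)},\psi_2^{E})$ and the Step~1 relation $\epsilon\epsilon_1=\pm\epsilon(\tfrac{1}{2},\phi^{(1)}\otimes\chi^{-3},\psi_2^{E})$ produces $\eta(c_1)=\pm\epsilon(\tfrac{1}{2},\phi^{(1)}\otimes\chi^{-3},\psi_2^{E})$, with $+$ for $x=n$ and $-$ for $x=s$. This reproduces $\eta_n^{\blacklozenge}$ and $\eta_s^{\blacklozenge}$ in both the irreducible and the reducible sub-cases, and one-dimensionality of the Hom space is inherited from~(B)$_1$ via the see-saw isomorphism.

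\textbf{Main obstacle.} The conceptual step is the see-saw reduction; the bulk of the work is the $\epsilon$-factor and component-group bookkeeping in the transport step, particularly managing the discrepancy between the $\chi$- and $\chi^{-1}$-normalisations of the $\U(V_2)\times\U(W_3)$ theta correspondence, the contragredient twist $\nu$ of Section~\ref{SS:llc} acting on the pairs $\pi_2,\pi_2^{\vee}$ and $\pi_0',\pi_0$, and the consistent matching of inner forms $\epsilon,\epsilon_1,\epsilon_2$ through the sign conditions of Theorems~\ref{Te} and~\ref{Tae} so that both $\pi_1$ and $\pi_2$ are attached to the same $\U(W_3^{\epsilon})$.
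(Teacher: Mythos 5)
Your overall strategy—realise the two factors of $\pi^x(\eta)$ as theta lifts from $U(V_1)$ and $U(V_2)$, pass through the see-saw (\ref{ss}) to a Bessel Hom for $U(V_2)\times U(V_1)$, apply (B)$_1$, then transport the component-group character back via (\ref{theta1})--(\ref{theta2}) and Theorem~\ref{Tae}(iii)—is the same as the paper's. But there is a genuine gap at the see-saw step. The see-saw identity produces
\[
\Hom_{\Delta \U(W_3^\epsilon)}(\pi^x(\eta),\omega_{\psi,W_3^{\epsilon}}) \;\simeq\; \Hom_{\U(V_1^{\epsilon'})}\bigl(\Theta_{\psi,V_2^{\epsilon'},W_3^{\epsilon}}(\pi_2^{\vee}),\,\sigma\bigr),
\]
where the object on the right is the \emph{big} theta lift $\Theta$, not its maximal semisimple quotient $\theta$. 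To identify this with a single member of the tempered $L$-packet $\Pi_{(\phi^{(2)})^{\vee}}$ and then invoke (B)$_1$, you need to know that this $\Theta$ is either zero or irreducible (equivalently, that $\Theta_{\psi,V_2^{\epsilon'},W_3^{\epsilon}}\circ\Theta_{\psi,W_3^{\epsilon},V_2^{\epsilon'}}$ returns the original irreducible representation). That is not automatic for the ``going down'' lift $W_3\to V_2$, and Theorem~\ref{Tae}(iv) only covers the $V_2\to W_3$ direction. The paper supplies exactly this missing input from Theorem 4.1 and Proposition 5.4 of \cite{ag}; indeed Remark~4.4 of the paper makes a point of noting that when $\phi^{(2)}$ contains $\chi^{-3}$ one cannot apply Proposition 5.4 and must impose an irreducibility hypothesis, which shows the issue is substantive rather than cosmetic. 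Your write-up treats the see-saw isomorphism as directly giving $\Hom_{\U(V_1^{\epsilon_1})}(\pi_0,\sigma)$ for an irreducible $\pi_0$, silently absorbing this irreducibility.

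A secondary imprecision: you assert the root numbers involving $\phi^{(2)}$ are ``insensitive to the swap $\psi_{-2}^E\leftrightarrow\psi_2^E$'' on the grounds that $\det\phi^{(2)}|_{F^{\times}}=1$. A conjugate-symplectic $2$-dimensional $\phi^{(2)}$ only has $\det\phi^{(2)}$ conjugate-orthogonal, i.e.\ trivial on $\N_{E/F}(E^{\times})$, which need not make $\det\phi^{(2)}(-1)=1$. The identity actually needed (and used in the paper's existence step) is $\epsilon(\tfrac12,(\phi^{(1)})^{\vee}\otimes(\phi^{(2)})^{\vee},\psi_{-2}^{E})=\epsilon(\tfrac12,\phi^{(1)}\otimes\phi^{(2)},\psi_2^{E})$, which couples the additive-character change with the passage to the contragredient; together with the $\nu$-twist of Section~\ref{SS:llc} coming from $\pi\mapsto\pi^{\vee}$, this is what makes the bookkeeping close. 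Once the irreducibility input from \cite{ag} is supplied and the $\epsilon$-factor identity is stated via the contragredient rather than the determinant, your transport step does reproduce $\eta_n^{\blacklozenge}$ and $\eta_s^{\blacklozenge}$.
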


\begin{rem}When $x=n$ or $s$, we have $\eta_{x}^{\blacklozenge}(z_{\theta^x(\phi^{(1)})})=\eta_{x}^{\blacklozenge}(z_{\theta(\phi^{(2)})})$ so that $\eta_{x}^{\blacklozenge}$ always corresponds to a representation $\pi^x(\eta_{x}^{\blacklozenge})$ of a relevant pure inner form of $G_3$.

\end{rem}

\begin{proof} For each $x=n,s$, we first prove the existence of some $\epsilon_x \in \{\pm1\}$ and $\pi^{x}(\eta)\in \Pi^{R,\ \epsilon_x}_{\theta^{x}(\phi^{(1)},\phi^{(2)})}$ such that $$\Hom_{\Delta \U(W_3^{\epsilon_x})}(\pi^{x}(\eta), \omega_{\psi,W_3^{\epsilon_x}}) \ne 0.$$
For $a\in F^{\times}$, let $L_a$ be the 1-dimensional Hermitian space with form $a \cdot \N_{E/F}$. Then
\[ V_{2}^{+}/V_1^{+} \simeq V_{2}^{-}/V_1^{-} \simeq L_{-1}.
\]
We consider the following see-saw diagram : ($\epsilon, \epsilon'$ will be determined soon)
\begin{equation}\label{ss}
 \xymatrix{
  \U(W_3^{\epsilon})  \times \U(W_3^{\epsilon})  \ar@{-}[dr] \ar@{-}[d] & \U(V^{\epsilon'}_{2}) \ar@{-}[d] \\
  \U(W_3^{\epsilon}) \ar@{-}[ur] &  \U(V^{\epsilon'}_1) \times \U(L_{-1})}.
\end{equation}
In this diagram, we shall use three theta correspondences : 
\begin{enumerate}
\item $U(V_2^{\epsilon'}) \times U(W_3^{\epsilon})$ relative to the pair of characters $(\chi^2, \chi^3)$;
\item $\U(V_1^{\epsilon'}) \times U(W_3^{\epsilon})$ relative to the pair of characters $(\chi, \chi^3)$;
\item $\U(L_{-1}) \times \U(W_3^{\epsilon})$ relative to the pair of characters $(\chi^{-1}, \chi^3)$.

\end{enumerate}

\noindent By $(B)_1$, there is a unique $\epsilon'\in \{\pm1\}$ and a unique pair of components characters $$ (\eta_2 ,\eta_1) \in \Irr^{\epsilon'}({S}_{(\phi^{(2)})^{\vee}}) \times \Irr^{\epsilon'}({S}_{(\phi^{(1)})^{\vee}})$$ such that $$\Hom_{\Delta U(V_1^{\epsilon'})}(\pi(\eta_2)\otimes \pi(\eta_1),\mathbb{C})\ne 0.$$
Moreover, $\epsilon'=\eta_1(a_1)=\epsilon(\frac{1}{2},(\phi^{(1)})^{\vee} \otimes (\phi^{(2)})^{\vee}, \psi^E_{-2})=\epsilon(\frac{1}{2},\phi^{(1)} \otimes \phi^{(2)}, \psi_2^{E})$.

By Theorem \ref{Tae} (i), (iv) and Theorem 4.1 in \cite{ag},$\Theta_{\psi,V_2^{\epsilon'},W_3^{\epsilon}}\big(\Theta_{\psi,W_3^{\epsilon},V_2^{\epsilon'}}(\pi(\eta_2))\big)$ is nonzero  for any $\epsilon \in \{\pm1\}$. Since $\Theta_{\psi,W_3^{\epsilon},V_2^{\epsilon'}}(\pi(\eta_2)) \boxtimes \Theta_{\psi,V_2^{\epsilon'},W_3^{\epsilon}}\big(\Theta_{\psi,W_3^{\epsilon},V_2^{\epsilon'}}(\pi(\eta_2))\big)$ is the maximal $\Theta_{\psi,W_3^{\epsilon},V_2^{\epsilon'}}(\pi(\eta_2))$-isotypic quotient of $\omega_{\psi,V_2,W_3}$ and $\Theta_{\psi,W_3^{\epsilon},V_2^{\epsilon'}}(\pi(\eta_2)) \boxtimes \pi(\eta_2)$ is a $\Theta_{\psi,W_3^{\epsilon},V_2^{\epsilon'}}(\pi(\eta_2))$-isotypic quotient of $\omega_{\psi,V_2,W_3}$, $\pi(\eta_2)$ should be a quotient of $\Theta_{\psi,V_2^{\epsilon'},W_3^{\epsilon}}\big(\Theta_{\psi,W_3^{\epsilon},V_2^{\epsilon'}}(\pi(\eta_2))\big)$. By Proposition 5.4 in \cite{ag}, $\Theta_{\psi,V_2^{\epsilon'},W_3^{\epsilon}}\big(\Theta_{\psi,W_3^{\epsilon},V_2^{\epsilon'}}(\pi(\eta_2))\big)$ is irreducible and thus we have $\Theta_{\psi,V_2^{\epsilon'},W_3^{\epsilon}}\big(\Theta_{\psi,W_3^{\epsilon},V_2^{\epsilon'}}(\pi(\eta_2))\big)=\pi(\eta_2)$.

Since $$\Hom_{\Delta U(V_1^{\epsilon'})}(\pi(\eta_2), \pi^{\vee}(\eta_1))\ne 0,$$  by the see-saw identity and Remark \ref{con}, we have $$\Hom_{\Delta U(W_3^{\epsilon})}\big(\Theta_{\psi,W_3^{\epsilon},V_1^{\epsilon'}}(\pi^{\vee}(\eta_1)) \otimes \omega^{\vee}_{\psi,W_3^{\epsilon}}, \Theta_{\psi,W_3^{\epsilon},V_2^{\epsilon'}}(\pi(\eta_2))\big)\ne 0$$ and since
$\Theta_{\psi,W_3^{\epsilon},V_2^{\epsilon'}}(\pi(\eta_2))$ and $\omega^{\vee}_{\psi,W_3^{\epsilon}}$ are admissible, we have $$\Hom_{\Delta U(W_3^{\epsilon})}\big(\Theta_{\psi,W_3^{\epsilon},V_1^{\epsilon'}}(\pi^{\vee}(\eta_1)) \otimes \Theta^{\vee}_{\psi,W_3^{\epsilon},V_2^{\epsilon'}}(\pi(\eta_2)) , \omega_{\psi,W_3^{\epsilon}} \big)\ne 0.$$\\
By Theorem \ref{Te} (iii) and Theorem \ref{Tae} (i), the $L$-parameter of $ \Theta_{\psi,W_3^{\epsilon},V_1^{\epsilon'}}(\pi^{\vee}(\eta_1)) \otimes \Theta^{\vee}_{\psi,W_3^{\epsilon},V_2^{\epsilon'}}(\pi(\eta_2)) $ is \[ \begin{cases} \theta^{n}(\phi^{(1)},\phi^{(2)}) \quad  \text{ if } \epsilon = \epsilon(\frac{1}{2},\phi^{(1)} \otimes \chi^{-3}, \psi_2^{E})\cdot \epsilon(\frac{1}{2},\phi^{(1)} \otimes \phi^{(2)}, \psi_2^{E})\\ \theta^{s}(\phi^{(1)},\phi^{(2)}) \quad  \text{ if } \epsilon = -\epsilon(\frac{1}{2},\phi^{(1)} \otimes \chi^{-3}, \psi_2^{E})\cdot \epsilon(\frac{1}{2},\phi^{(1)} \otimes \phi^{(2)}, \psi_2^{E}) \end{cases} \]
and by Theorem \ref{Te} (v), Theorem \ref{Tae} (iii), we see that their associated component characters are $\eta_{n}^{\blacklozenge}, \eta_{s}^{\blacklozenge}$ in each cases.

Next we shall prove that these are the unique representations which yield \emph{Fourier-Jacobi} model in each $L$-packets $\Pi_{\theta^{n}(\phi^{(1)},\phi^{(2)})}$ and $\Pi_{\theta^{s}(\phi^{(1)},\phi^{(2)})}$.\

\noindent Since $\theta^{s}(\phi^{(1)},\phi^{(2)})$ is tempered $L$-parameter, the uniqueness easily follows from (FJ)$_3$ in this case. Therefore, we shall only consider the non-tempered $L$-parameter $\theta^{n}(\phi^{(1)},\phi^{(2)})$.\\
Let $\pi_2 \otimes \pi_1 \in \Pi^{R, \ \epsilon}_{\theta^{n}(\phi^{(1)},\phi^{(2)})}=\Pi^{\epsilon}_{\theta^{n}(\phi^{(1)})}\times \Pi^{\epsilon}_{\theta(\phi^{(2)})}$ be a representation satisfying $$\Hom_{\Delta \U(W_3^{\epsilon})}(\pi_2 \otimes \pi_1, \omega_{\psi,W_3^{\epsilon}}) \ne 0$$ and in turn $$\Hom_{\Delta \U(W_3^{\epsilon})}(\pi_2 \otimes \omega^{\vee}_{\psi,W_3^{\epsilon}} , \pi^{\vee}_1)\ne0.$$

\noindent (The existence of such $\pi_2 \otimes \pi_1$ was ensured by the previous step.)\\
By Theorem \ref{Te} (iv), we can write $\pi_2=\Theta_{\psi,W_3^{\epsilon},V_1^{\epsilon'}}(\pi^{(1)})$ for some $\pi^{(1)}\in \Pi_{\theta^{(1)}}^{\epsilon'}$ where $$\epsilon'=\epsilon \cdot \epsilon(\frac{1}{2},\phi^{(1)} \otimes \chi^{-3}, \psi_2^{E}).$$
Then by applying the see-saw duality in the see-saw diagram in (\ref{ss}), one has
$$\Hom_{\Delta \U(W_3^{\epsilon})}(\pi_2 \otimes \omega^{\vee}_{\psi,W_3^{\epsilon}} , \pi^{\vee}_1) \simeq \Hom_{U(V_1^{\epsilon'})}(\pi^{(2)},\pi^{(1)})\ne0$$ where $\pi^{(2)}=\Theta_{\psi,V_2^{\epsilon'},W_3^{\epsilon}}(\pi^{\vee}_1)$. \\Note that $\pi^{(2)} \ne0$ and so it has tempered $L$-parameter $(\phi^{(2)})^{\vee}$. \\Then by the (B)$_1$, $(\pi^{(2)},\pi^{(1)})$ is the unique pair in the $L$-packet $\Pi_{(\phi^{(2)})^{\vee}} \times \Pi_{\phi^{(1)}}$ such that $$\Hom_{U(V_1^{\epsilon'})}(\pi^{(2)},\pi^{(1)})\ne0$$ and so $(\pi_2,\pi_1)$ should be $(\Theta_{\psi,W_3^{\epsilon},V_1^{\epsilon'}}(\pi^{(1)}),\Theta^{\vee}_{\psi,W_3^{\epsilon},V_2^{\epsilon'}}(\pi^{(2)}))$.\\ This settles down the uniqueness issue.

\end{proof}

\begin{rem}When the $L$-parameter $\phi^{(2)}$ of $U(V_2^{\pm})$ contains $\chi^{-3}$, we can write $\phi^{(2)}=\phi_0 \oplus \chi^{-3}$ for a $L$-parameter $\phi_0$ of $U(V_1^{\pm})$. Then, we set $$\theta(\phi^{(2)})=\begin{cases}3\cdot \chi^{-2} \quad \quad \quad \quad \ \ \text{ if $\phi_0=\chi^{-3}$ }, \\ \phi_0 \cdot \chi\oplus 2 \cdot \chi^{-2} \quad \text{ if } \phi_0\ne \chi^{-3}\end{cases}$$ and $$S_{\theta(\phi^{(2)})}=\begin{cases}(\ZZ/2\ZZ)b_1 \ \quad \quad \quad  \quad \quad \quad  \text{ if $\phi_0=\chi^{-3}$ },   \\ (\ZZ/2\ZZ)b_1 \times (\ZZ/2\ZZ)c_1  \quad \text{ if } \phi_0\ne \chi^{-3}.\end{cases}$$

If one develops a similar argument in this case, one could also have a recipe as in Theorem \ref{mul} for non-tempered case. However, we need some assumption on the irreducibility of the theta lifts because we cannot apply Proposition 5.4 in \cite{ag}.

\end{rem}

\begin{thm}\label{mul} Let the notations be as in Thorem \ref{thm2} and assume this time that $\phi^{(2)}$ contains $\chi^{-3}$.\\
Assume that for $\pi \in \prod^{\epsilon}_{\theta((\phi^{(2)})^{\vee})}$, if $\Theta_{\psi,V_2^{\epsilon'},W_3^{\epsilon}}(\pi)$ is nonzero, it is irreducible.

Then,
\[ \Hom_{\Delta \U(W_3^\epsilon)}(\pi^{n}(\eta), \omega_{\psi,W_3^{\epsilon}}) \ne 0 \Longleftrightarrow \eta = \eta_{n}^{\blacklozenge}\]
where $\eta_{n}^{\blacklozenge}\in\Irr(S_{\theta^{n}(\phi^{(1)},\phi^{(2)})}) = \Irr(S_{\theta^n(\phi^{(1)})}) \times  \Irr(S_{\theta(\phi^{(2)})}) $ is specified as follows;

\begin{itemize}
\item When $\phi_0=\chi^{-3}$,
$$\begin{cases}\eta_{n}^{\blacklozenge}(a_1)=\epsilon(\frac{1}{2},\phi^{(1)} \otimes \chi^{-3},\psi_2^{E}) \\ \eta_{n}^{\blacklozenge}(b_1)=\epsilon(\frac{1}{2},\phi^{(1)} \otimes \chi^{-3},\psi_2^{E}).  \end{cases}$$

\item When $\phi_0 \ne \chi^{-3}$,
$$\begin{cases}\eta_{n}^{\blacklozenge}(a_1)= \epsilon(\frac{1}{2}, \phi^{(1)} \otimes \phi_0,\psi_2^{E}) \\ \eta_{n}^{\blacklozenge}(b_1)=\epsilon(\frac{1}{2}, \phi^{(1)} \otimes \phi_0 ,\psi_2^{E})  \\ \eta_{n}^{\blacklozenge}(c_1)=\epsilon(\frac{1}{2},\phi^{(1)} \otimes \chi^{-3},\psi_2^{E}).\end{cases}$$ 
\end{itemize}
\end{thm}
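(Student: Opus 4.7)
The plan is to adapt the proof of Theorem \ref{thm2} almost verbatim, making only the modifications forced by the new hypothesis that $\phi^{(2)}$ contains $\chi^{-3}$. I would start from the same see-saw diagram (\ref{ss}) and apply (B)$_1$ to the tempered pair $((\phi^{(2)})^{\vee},(\phi^{(1)})^{\vee})$ of $L$-parameters for $U(V_2^{\pm})\times U(V_1^{\pm})$ to obtain a unique sign $\epsilon'\in\{\pm1\}$ and a unique pair of component characters $(\eta_2,\eta_1)\in\Irr^{\epsilon'}(S_{(\phi^{(2)})^{\vee}})\times\Irr^{\epsilon'}(S_{(\phi^{(1)})^{\vee}})$ such that $\Hom_{\Delta U(V_1^{\epsilon'})}(\pi(\eta_2)\otimes\pi(\eta_1),\CC)\ne 0$; this gives $\epsilon'=\epsilon(\tfrac{1}{2},\phi^{(1)}\otimes\phi^{(2)},\psi_2^{E})$ exactly as before.

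The first real change appears at the theta step. Since $(\phi^{(2)})^{\vee}$ now contains $\chi^{3}$, the theta correspondence for $U(V_2^{\epsilon'})\times U(W_3^{\epsilon})$ is governed by Case (ii) of Theorem \ref{Tae} (with $\chi$ replaced by $\chi^{-1}$). Part (ii)(a) forces a specific sign $\epsilon$, namely the unique one for which $\Theta_{\psi,W_3^{\epsilon},V_2^{\epsilon'}}(\pi(\eta_2))\ne 0$. Arguing as in Theorem \ref{thm2}, $\pi(\eta_2)$ is then a nonzero quotient of the iterated theta lift $\Theta_{\psi,V_2^{\epsilon'},W_3^{\epsilon}}\bigl(\Theta_{\psi,W_3^{\epsilon},V_2^{\epsilon'}}(\pi(\eta_2))\bigr)$. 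At precisely this point Proposition 5.4 of \cite{ag} is no longer available, and its role is played by the irreducibility hypothesis stated in the theorem, which forces $\Theta_{\psi,V_2^{\epsilon'},W_3^{\epsilon}}\bigl(\Theta_{\psi,W_3^{\epsilon},V_2^{\epsilon'}}(\pi(\eta_2))\bigr)=\pi(\eta_2)$. Plugging this identity into the see-saw, together with the admissibility of $\omega_{\psi,W_3^{\epsilon}}^{\vee}$, one obtains a nonzero element of
\[\Hom_{\Delta U(W_3^{\epsilon})}\bigl(\Theta_{\psi,W_3^{\epsilon},V_1^{\epsilon'}}(\pi^{\vee}(\eta_1))\otimes\Theta^{\vee}_{\psi,W_3^{\epsilon},V_2^{\epsilon'}}(\pi(\eta_2)),\,\omega_{\psi,W_3^{\epsilon}}\bigr),\]
exactly as in the proof of Theorem \ref{thm2}.

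The component character of this representation is then read off by transporting $(\eta_1,\eta_2)$ through the bijections of Theorem \ref{Te}(v) (which twists the $a_1$-value by $\epsilon(\tfrac{1}{2},\phi^{(1)}\otimes\chi^{-3},\psi_2^{E})$) and Theorem \ref{Tae}(iii) Case (ii) (where the theta-induced bijection on component groups is the identity), yielding $\eta_n^{\blacklozenge}$. The only genuine subtlety here is the degenerate case $\phi_0=\chi^{-3}$, in which $\phi^{(2)}=2\chi^{-3}$ and $\theta(\phi^{(2)})=3\chi^{-2}$ have component groups of rank one rather than two; the two copies of $\chi^{-3}$ in $\phi^{(2)}$ contribute identical $\epsilon$-factors which cancel in pairs, simplifying the recipe to the displayed formula. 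Uniqueness within $\Pi_{\theta^{n}(\phi^{(1)},\phi^{(2)})}$ is then obtained by the same reversing argument: given any $\pi_{2}\otimes\pi_{1}$ with nonvanishing Fourier-Jacobi model, write $\pi_{2}=\Theta_{\psi,W_3^{\epsilon},V_1^{\epsilon'}}(\pi^{(1)})$ via Theorem \ref{Te}(iv) and feed this back through see-saw to translate the hypothesis into $\Hom_{U(V_1^{\epsilon'})}(\pi^{(2)},\pi^{(1)})\ne 0$ with $\pi^{(2)}=\Theta_{\psi,V_2^{\epsilon'},W_3^{\epsilon}}(\pi_{1}^{\vee})\in\Pi_{(\phi^{(2)})^{\vee}}$, so that (B)$_1$ singles out the pair uniquely.

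The main obstacle in this plan is conceptual rather than computational. Namely, the irreducibility of the iterated theta lift is a genuine external hypothesis replacing the unconditional Proposition 5.4 of \cite{ag}; verifying it on the nose for specific parameters lies outside the scope of the theorem, which is why it must be assumed. A secondary bookkeeping difficulty is the collapse of the component group in the degenerate case $\phi_{0}=\chi^{-3}$, where one must carefully track which irreducible summands of $\theta(\phi^{(2)})$ have been identified in order to determine which $\epsilon$-factors survive in the final recipe.
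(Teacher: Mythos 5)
Your overall strategy matches the paper: run (B)$_1$ on the tempered pair, theta-lift through the see-saw diagram (\ref{ss}), replace Proposition 5.4 of \cite{ag} by the irreducibility hypothesis, and invert the see-saw for uniqueness. However, there is a genuine gap at the step you dismiss most quickly.

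When $\phi^{(2)}$ contains $\chi^{-3}$, the dichotomy of Theorem \ref{Tae}(ii)(a) fixes the sign $\epsilon$, but you never compute what that sign is. The paper's proof does exactly this: using Theorem \ref{Tae}(iii) (Case $\phi$ contains $\chi^3$, where $\theta(\eta)=\eta$), one writes $\Theta_{\psi,W_3^{\epsilon},V_2^{\epsilon'}}(\pi(\eta_2))=\pi(\eta_3)$ and finds
\[ \epsilon=\eta_3\bigl(z_{\theta((\phi^{(2)})^{\vee})}\bigr)=\eta_3(b_1)=\eta_2(b_1)=\epsilon\bigl(\tfrac12,\phi^{(1)}\otimes\phi_0,\psi_2^E\bigr), \]
while (B)$_1$ already gave $\epsilon'=\epsilon\bigl(\tfrac12,\phi^{(1)}\otimes\phi^{(2)},\psi_2^E\bigr)=\epsilon\bigl(\tfrac12,\phi^{(1)}\otimes\phi_0,\psi_2^E\bigr)\cdot\epsilon\bigl(\tfrac12,\phi^{(1)}\otimes\chi^{-3},\psi_2^E\bigr)$. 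Hence $\epsilon\cdot\epsilon'=\epsilon\bigl(\tfrac12,\phi^{(1)}\otimes\chi^{-3},\psi_2^E\bigr)$. This identity is indispensable: by Theorem \ref{Te}(ii)--(iii), it is precisely the condition forcing $\Theta_{\psi,W_3^{\epsilon},V_1^{\epsilon'}}(\pi^{\vee}(\eta_1))$ to have the \emph{non-tempered} parameter $\theta^n(\phi^{(1)})$ rather than the supercuspidal $\theta^s(\phi^{(1)})$. Without it you cannot assert that the resulting representation lies in $\Pi_{\theta^{n}(\phi^{(1)},\phi^{(2)})}$ at all, nor that the $a_1$-value is twisted by $\epsilon(\tfrac12,\phi^{(1)}\otimes\chi^{-3},\psi_2^E)$ via (\ref{theta1}). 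It is also the reason the theorem only covers $x=n$ and not $x=s$, a point the paper's subsequent remark makes explicit and which your write-up does not account for. By contrast, the collapse of the component group when $\phi_0=\chi^{-3}$, which you flag as ``the only genuine subtlety,'' is pure bookkeeping (the factor $\epsilon(\tfrac12,\phi^{(1)}\otimes\chi^{-3},\psi_2^E)^2=1$ is absorbed whenever $\phi^{(2)}\supset\chi^{-3}$, not only in the degenerate case). You should supply the explicit computation of $\epsilon$ and the verification that $\epsilon\cdot\epsilon'=\epsilon(\tfrac12,\phi^{(1)}\otimes\chi^{-3},\psi_2^E)$ before transporting component characters.
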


\begin{proof}We first prove the existence of some $\epsilon \in \{\pm1\}$ and $\pi^{n}(\eta)\in \Pi^{R,\ \epsilon}_{\theta^{n}(\phi^{(1)},\phi^{(2)})}$ such that $$\Hom_{\Delta \U(W_3^{\epsilon})}(\pi^{n}(\eta), \omega_{\psi,W_3^{\epsilon}}) \ne 0.$$
By $(B)_1$, there is a unique $\epsilon'\in \{\pm1\}$ and a unique pair of components characters $$ (\eta_2 ,\eta_1) \in \Irr^{\epsilon'}({S}_{(\phi^{(2)})^{\vee}}) \times \Irr^{\epsilon'}({S}_{(\phi^{(1)})^{\vee}})$$ such that $$\Hom_{\Delta U(V_1^{\epsilon'})}(\pi(\eta_2)\otimes \pi(\eta_1),\mathbb{C})\ne 0.$$
Moreover, $\eta_2(b_1)=\epsilon((\phi^{(1)})^{\vee} \otimes (\phi_0)^{\vee} , \psi_{-2}^{E})=\epsilon(\frac{1}{2},\phi^{(1)} \otimes \phi_0 , \psi_{2}^{E})$ and $$\epsilon'=\eta_1(a_1)=\epsilon(\frac{1}{2},(\phi^{(1)})^{\vee} \otimes (\phi^{(2)})^{\vee}, \psi^E_{-2})=\epsilon(\frac{1}{2},\phi^{(1)} \otimes \phi^{(2)}, \psi_2^{E}).$$
By Theorem \ref{Tae} (ii) \text{and} (iv), $\Theta_{\psi,W_3^{\epsilon},V_2^{\epsilon'}}(\pi(\eta_2))$  is a nonzero  irreducible representation of $U(W_3^{\epsilon})$ for some $\epsilon \in \{\pm1\}$ and by Theorem 4.1 in \cite{ag}, $\Theta_{\psi,V_2^{\epsilon'},W_3^{\epsilon}}\big(\Theta_{\psi,W_3^{\epsilon},V_2^{\epsilon'}}(\pi(\eta_2))\big)$ is nonzero. So by our assumption, it is irreducible and we have $\Theta_{\psi,V_2^{\epsilon'},W_3^{\epsilon}}\big(\Theta_{\psi,W_3^{\epsilon},V_2^{\epsilon'}}(\pi(\eta_2))\big)=\pi(\eta_2)$.

Since $$\Hom_{\Delta U(V_1^{\epsilon'})}(\pi(\eta_2), \pi^{\vee}(\eta_1))\ne 0,$$  by the see-saw identity and Remark \ref{con}, we have $$\Hom_{\Delta U(W_3^{\epsilon})}\big(\Theta_{\psi,W_3^{\epsilon},V_1^{\epsilon'}}(\pi^{\vee}(\eta_1)) \otimes \omega^{\vee}_{\psi,W_3^{\epsilon}}, \Theta_{\psi,W_3^{\epsilon},V_2^{\epsilon'}}(\pi(\eta_2))\big)\ne 0$$ and since
$\Theta_{\psi,W_3^{\epsilon},V_2^{\epsilon'}}(\pi(\eta_2))$ and $\omega^{\vee}_{\psi,W_3^{\epsilon}}$ are admissible, we have $$\Hom_{\Delta U(W_3^{\epsilon})}\big(\Theta_{\psi,W_3^{\epsilon},V_1^{\epsilon'}}(\pi^{\vee}(\eta_1)) \otimes \Theta^{\vee}_{\psi,W_3^{\epsilon},V_2^{\epsilon'}}(\pi(\eta_2)) , \omega_{\psi,W_3^{\epsilon}} \big)\ne 0.$$
Write $\Theta_{\psi,W_3^{\epsilon},V_2^{\epsilon'}}(\pi(\eta_2))=\pi(\eta_3)$ for some $\eta_3 \in \prod_{\theta((\phi^{(2)})^{\vee})}$.\\ If $\phi_0=\chi^{-3}$, then $\eta_3(z_{\theta((\phi^{(2)})^{\vee})})=\eta_3(3\cdot b_1)=\eta_3(b_1)$ and if $\phi_0 \ne \chi^{-3}$, then $\eta_3(z_{\theta((\phi^{(2)})^{\vee})})=\eta_3(b_1)$.\\ Thus in both cases, we have $$\epsilon=\eta_3(z_{\theta((\phi^{(2)})^{\vee})})=\eta_3(b_1)=\eta_2(b_1)=\epsilon(\frac{1}{2},\phi^{(1)} \otimes \phi_0, \psi_{2}^{E}).$$
Since $$\epsilon'=\epsilon(\frac{1}{2},\phi^{(1)} \otimes \phi^{(2)} , \psi_{2}^{E})=\epsilon(\frac{1}{2},\phi^{(1)} \otimes \phi_0 , \psi_{2}^{E}) \cdot \epsilon(\frac{1}{2},\phi^{(1)} \otimes \chi^{-3} , \psi_{2}^{E}),$$ we have $\epsilon \cdot \epsilon'=\epsilon(\frac{1}{2},\phi^{(1)} \otimes \chi^{-3} , \psi_{2}^{E})$ and so by Theorem \ref{Te} (iii) and Theorem \ref{Tae} (i), the $L$-parameter of $ \Theta_{\psi,W_3^{\epsilon},V_1^{\epsilon'}}(\pi^{\vee}(\eta_1)) \otimes \Theta^{\vee}_{\psi,W_3^{\epsilon},V_2^{\epsilon'}}(\pi(\eta_2)) $ is $\theta^{n}(\phi^{(1)},\phi^{(2)})$.

Furthermore, by applying Theorem \ref{Te} (v), Theorem \ref{Tae} (iii), we see that the associated component character of $ \Theta_{\psi,W_3^{\epsilon},V_1^{\epsilon'}}(\pi^{\vee}(\eta_1)) \otimes \Theta^{\vee}_{\psi,W_3^{\epsilon},V_2^{\epsilon'}}(\pi(\eta_2)) $ is exactly $\eta_{n}^{\blacklozenge}$ in each cases and it proves the existence part. The proof of the uniqueness part is essentially same as the one in Theorem \ref{thm2}.
\end{proof}

\begin{rem}It is remarkable that for the supercuspidal $L$-parameter $\theta^{s}(\phi^{(1)},\phi^{(2)}) $ with $\theta(\phi^{(2)})$ as above, the recipe, which is sugegsted in (FJ)$_3$, does not occur from the theta lift from $U(V_1^{\pm})$ and $U(V_2^{\pm})$. This is quite similar with the Proposition 4.6 in \cite{Haan}, which concerns the non-generic aspect of \emph{Bessel} case of the GGP conjecture. 

\end{rem}


\begin{thebibliography}{alpha}


\bibitem{agrs}A.~Aizenbud, D.~Gourevitch, S.~Rallis, and G.~Schiffmann, \emph{Multiplicity one theorems},
Ann. of Math. \textbf{172} (2010), 1407--1434

\bibitem{ag}H. Atobe, W. T. Gan, \emph{Local theta correspondence of tempered representations and Langlands parameters}, 
\href{https://arxiv.org/abs/1602.01299}{\texttt{arXiv:1602.01299}}.

\bibitem{bp1}
R.~Beuzart-Plessis,
\emph{Expression d'un facteur epsilon de paire par une formule int\'egrale},
Canad. J. Math. \textbf{66} (2014), 993--1049.

\bibitem{bp2}
\bysame,
\emph{La conjecture locale de Gross--Prasad pour les repr\'esentations temp\'er\'ees des groupes unitaires}, 
\href{http://arxiv.org/abs/1205.2987}{\texttt{arXiv:1205.2987}}.

\bibitem{bp3}
\bysame,
\emph{Endoscopie et conjecture raffin\'ee de Gan--Gross--Prasad pour les groupes unitaires}, to appear in Compos. Math.,

\bibitem{Gan2} Wee Teck Gan, Benedict Gross and Dipendra Prasad. \emph{Symplectic local root numbers, central critical $L$-values, and restriction problems in the representation theory of classical groups}, Asterisque \textbf{346} (2012), 1-110

\bibitem{ggp} Benedict Gross and Dipendra Prasad. \emph{On the decomposition of a representation of $SO_n$ when restricted to $SO_{n-1}$}, Canad. J. Math. \textbf{44}(5) (1992), 974--1002


\bibitem{gi}W. T. Gan and A. Ichino. \emph{Formal degrees and local theta correspondence},
Invent. Math. \textbf{195} (2014), 509--672.

\bibitem{iw} \bysame ,  \emph{The Gross-Prasad conjecture and local theta correspondence}, to appear in Invent. Math. 


\bibitem{gt1}
W.~T.~Gan and S.~Takeda,
\emph{On the Howe duality conjecture in classical theta correspondence},
to appear in Contemporary Math. (volume in honor of J. Cogdell's 60th birthday)

\bibitem{gt2}
\bysame,
\emph{A proof of the Howe duality conjecture}, J. of American Math. Society. \textbf{29} (2016), no. 2, 473-493


\bibitem{Ge1} Gelbart, S., Rogawski, J. \emph{L-functions and Fourier-Jacobi coefficients for the unitary group U(3)}, Invent. Math. \textbf{105} (1991), 445-472

\bibitem{Ge2}
\bysame,
\emph{Exceptional representations and Shimura's integral for the local unitary group U(3)}, In Festschrift in honor of Piatetski-Shapiro, volume \textbf{2}  (1990), pages 19-75. Israel Math. Conf. Proc. 


\bibitem{Haan} J. Haan, \emph{The local Gan-Gross-Prasad conjecture for $U(3) \times U(2)$ : the non-generic case}, J. of Number Theory. \textbf{165} (2016), 324-354
 
\bibitem{Ha} M. Harris, S.S. Kudla, J. Sweet , \emph{Theta dichotomy for unitary groups}, Journal of the A.M.S vol.\textbf{9} (1996),  941-1004

\bibitem{ht}
M.~Harris and R.~Taylor,
\emph{The geometry and cohomology of some simple Shimura varieties},
Annals of Mathematics Studies \textbf{151} (2001), Princeton University Press.

\bibitem{he}
G.~Henniart,
\emph{Une preuve simple des conjectures de Langlands pour $\mathrm{GL}(n)$ sur un corps $p$-adique},
Invent. Math. \textbf{139} (2000), 439--455.

\bibitem{kel} T. Kaletha. \emph{Genericity and contragredience in the local Langlands correspondence}, Algebra and Number theory. \textbf{7} (2013), 2447-2474

\bibitem{kmsw}
T.~Kaletha, A.~M\'inguez, S.~W.~Shin, and P.-J.~White.
\emph{Endoscopic classification of representations: inner forms of unitary groups}, 
\href{http://arxiv.org/abs/1409.3731}{\texttt{arXiv:1409.3731}}.

\bibitem{Stab6}
{C.M\oe glin and J.-L. Waldspurger}, 
{\em Stabilisation de la formule des traces tordue $\mathrm{VI}$: 
la partie g\'eom\'etrique de cette formule},
arXiv:1406.2257.

\bibitem{Stab10}
{C.M\oe glin and J.-L. Waldspurger}, 
{\em Stabilisation de la formule des traces tordue $\mathrm{X}$: 
stabilisation spectrale},
arXiv:1412.2981.


\bibitem{Mok} C. Mok, \emph{Endoscopic classification of representations of quasi-split unitary groups}, to appear in the Memoirs of the American Mathematical Society. 



\bibitem{Ro} Rogawski, J., \emph{Automorphic representations of unitary group in three variables}, Annals of Math. Studies vol.\textbf{123} (1990). Princeton University Press 

\bibitem{scholze}
P.~Scholze,
\emph{The local Langlands correspondence for $\mathrm{GL}_n$ over $p$-adic fields},
Invent. Math. \textbf{192} (2013), 663--715.

\bibitem{sun}
B. Sun, 
\emph{Multiplicity one theorems for Fourier-Jacobi models},
Amer. J. Math. \textbf{134}(6) (2012), 1655--1678 

\bibitem{v}
D.~A.~Vogan,~Jr.,
\emph{The local Langlands conjecture},
Representation theory of groups and algebras,
Contemp. Math. \textbf{145} (1993), 305--379, Amer. Math. Soc.

\bibitem{w}
J.-L.~Waldspurger,
\emph{D\'emonstration d'une conjecture de dualit\'e de Howe dans le cas $p$-adique, $p \ne 2$},
Festschrift in honor of I.~I.~Piatetski-Shapiro on the occasion of his sixtieth birthday, Part I,
Israel Math. Conf. Proc. \textbf{2} (1990), pp.~267--324, Weizmann.

\bibitem{Stab1}
{J.-L. Waldspurger}, 
{\em Stabilisation de la formule des traces tordue $\mathrm{I}$: 
endoscopie tordue sur un corps local}, 
arXiv:1401.4569.

\bibitem{Stab2}
{J.-L. Waldspurger}, 
{\em Stabilisation de la formule des traces tordue $\mathrm{II}$: 
int\'egrales orbitales et endoscopie sur un corps local non-archim\'edien; 
d\'efinitions et \'enonc\'es des r\'esultats},
arXiv:1401.7127.

\bibitem{Stab3}
{J.-L. Waldspurger}, 
{\em Stabilisation de la formule des traces tordue $\mathrm{III}$: 
int\'egrales orbitales et endoscopie sur un corps local non-archim\'edien; 
r\'eductions et preuves},
arXiv:1402.2753.

\bibitem{Stab4}
{J.-L. Waldspurger}, 
{\em Stabilisation de la formule des traces tordue $\mathrm{IV}$: 
transfert spectral archim\'edien},
arXiv:1403.1454.

\bibitem{Stab5}
{J.-L. Waldspurger}, 
{\em Stabilisation de la formule des traces tordue $\mathrm{V}$: 
int\'egrales orbitales et endoscopie sur le corps r\'eel},
arXiv:1404.2402.

\bibitem{Stab7}
{J.-L. Waldspurger}, 
{\em Stabilisation de la formule des traces tordue $\mathrm{VII}$: 
descente globale},
arXiv:1409.0960.

\bibitem{Stab8}
{J.-L. Waldspurger}, 
{\em Stabilisation de la formule des traces tordue $\mathrm{VIII}$: 
l'application $\epsilon_{\tilde{M}}$ 
sur un corps de base local non-archim\'edien},
arXiv:1410.1124.

\bibitem{Stab9}
{J.-L. Waldspurger}, 
{\em Stabilisation de la formule des traces tordue $\mathrm{IX}$: 
propri\'et\'es des int\'egrales orbitales pond\'er\'ees $\omega$-\'equivariantes sur le corps r\'eel},
arXiv:1412.2565.

\end{thebibliography}
\end{document}